\newtheorem{theorem}{Theorem}    
\newtheorem{proposition}[theorem]{Proposition}
\newtheorem{corollary}[theorem]{Corollary}
\newtheorem{lemma}[theorem]{Lemma}
\theoremstyle{definition}
\numberwithin{theorem}{section}
\numberwithin{definition}{section}
\numberwithin{equation}{section}
\newcommand{\ra}{\rightarrow}
\def\C{\mathbb{C}}
\def\R{\mathbb{R}}
\newcommand{\ev}{\end{pmatrix}}
\def\bp{\mathbf{p}}
\def\bA{\mathbf{A}}
\def\bf{\mathbf{f}}
\def\bg{\mathbf{g}}
\def\bh{\mathbf{h}}
\def\bv{\mathbf{v}}
\def\T{\mathcal{T}}
\def\scriptO{\mathcal{O}}
\def\one{{\mathbf 1}}
\def\dist{\text{dist}}
\title{A Sharpened Inequality for Twisted Convolution}
\author{Kevin O'Neill}
\begin{document}
\maketitle

\begin{abstract}
Consider the trilinear form for twisted convolution on $\R^{2d}$:
\begin{equation*}
\T_t(\bf):=\iint f_1(x)f_2(y)f_3(x+y)e^{it\sigma(x,y)}dxdy,\end{equation*}
 where $\sigma$ is a symplectic form and $t$ is a real-valued parameter.
It is known that in the case $t\neq0$ the optimal constant for twisted convolution is the same as that for convolution, though no extremizers exist.
Expanding about the manifold of triples of maximizers and $t=0$ we prove a sharpened inequality for twisted convolution with an arbitrary antisymmetric form in place of $\sigma$.

\end{abstract} 


\section{Introduction}

Young's convolution inequality states that for dimensions $d\geq1$ and functions $f\in L^p(\R^d), g\in L^q(\R^d)$,

\begin{equation}\label{eq:Young's}
||f*g||_{L^r}\leq \bA^d_{\bp}||f||_{L^p}||g||_{L^q},
\end{equation}
where $p,q,r\in[1,\infty]$ with $\frac{1}{p}+\frac{1}{q}=1+\frac{1}{r}$. $\bA^d_{\bp}=\prod_{j=1}^3 C_{p_j}^d$ is the optimal constant, where $C_p=p^{1/p}/p'^{1/p'}$, and $p'$ is the conjugate exponent of $p$ \cite{MR0385456}, \cite{MR0412366}. For the purpose of this paper, it is convenient to use the following, related trilinear form:

\begin{equation}
\T(f_1,f_2,f_3)=\iint f_1(x)f_2(y)f_3(x+y)dxdy.
\end{equation}

Through duality, one may rewrite \eqref{eq:Young's} as

\begin{equation}\label{eq:Young's trilinear}
\left|\T(\bf)\right| \leq \bA^d_{\bp}\prod_{j=1}^3||f_j||_{L^{p_j}}
\end{equation}
for all $\bf=(f_j\in L^{p_j}(\R^d):j=1,2,3)$, with $\sum p_j^{-1}=2$ and $\bp=(p_j:j=1,2,3)\in[1,\infty]^3$.

From here on out, we take $p_j\in(1,\infty)$. In \cite{MR0412366}, Brascamp and Lieb show that the maximizers of \eqref{eq:Young's trilinear} are precisely the triple of Gaussians $\bg =(e^{-\pi p_j'|x|^2}: j=1,2,3)$ and its orbit under the following symmetries.


\begin{itemize}
\item $(f_1,f_2,f_3)\mapsto(af_1,bf_2,cf_3)$ for $a,b,c\neq0$. (Scaling)
\item $(f_1,f_2,f_3)\mapsto(M_{\xi}f_1,M_{\xi}f_2,M_{-\xi}f_3)$, where $M_{\xi}f(x)=e^{ix\cdot\xi}$ for $\xi\in\R^d$. (Modulation)
\item $(f_1,f_2,f_3)\mapsto(\tau_{v_1}f_1,\tau_{v_2}f_2,\tau_{v_1+v_2}f_3)$, where $\tau_vf(x)=f(x+v)$ for $v\in\R^d$. (Translation)
\item $(f_1,f_2,f_3)\mapsto(f_1\circ\psi,f_2\circ\psi,f_3\circ\psi)$, where $\psi$ is an invertible linear map on $\R^d$. (Diagonal Action of the General Linear Group)
\end{itemize}

Note that these symmetries do not necessarily preserve $|\T(\bf)|$, but they do preserve $|\Phi(\bf)|$, where $\Phi(\bf):=\frac{\T(\bf)}{\prod_j||f_j||_{p_j}}$.

Let $\scriptO_C(\bf)$ denote the orbit of the triple $\bf$ under the above symmetries. Define the distance from $\bg$ to $\scriptO_C(\bf)$ as

\begin{equation}\label{eq:define distance}
\dist_\bp(\scriptO_C(\bf),\bg):=\inf_{\bh\in\scriptO_C(\bf)}\max_j||h_j-g_j||_{p_j}.
\end{equation}

Note that the symmetries of an operator preserve the (normalized) distance of a triple from the manifold of maximizers.

Christ \cite{ChristSY} proved the following quantitative stability theorem for Young's convolution inequality.

\begin{theorem}\label{thm:ChristSY}
Let $K$ be a compact subset of $(1,2)^3$. Let $\bp$ satisfy $\sum_{j=1}^3p_j^{-1}=2$. For each $d\geq1$, there exists $c>0$ such that for all $\bp\in K$ and all $\bf\in L^\bp(\R^d)$,
\begin{equation}
|\T(\bf)|\leq \left(\bA^d_\bp-c\dist_\bp(\scriptO_C(\bf),\bg)^2\right)\prod_j||f_j||_{p_j}.
\end{equation}
\end{theorem}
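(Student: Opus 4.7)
My plan is to prove this sharpened inequality by a dichotomy between triples $\bf$ that are close to the orbit $\scriptO_C(\bg)$ and those that are far from it. By the scaling symmetry I may assume $\|f_j\|_{p_j}=1$ throughout, and by invariance of $\Phi$ under the listed symmetries together with \eqref{eq:define distance}, I may replace $\bf$ by a representative $\bh\in\scriptO_C(\bf)$ that nearly attains the infimum, so that $\max_j\|h_j - g_j\|_{p_j}\le 2\delta_0$, where $\delta_0:=\dist_\bp(\scriptO_C(\bf),\bg)$. It then suffices to prove $|\Phi(\bh)|\le \bA^d_\bp - c\delta_0^2$.

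\textbf{Local analysis near the orbit.} For $\delta_0\le\delta$ with $\delta$ small, write $h_j=g_j+\epsilon_j$. Since $\bg$ is a critical point of $\log\Phi$ (as a maximizer), a Taylor expansion yields
\begin{equation*}
\log\Phi(\bh)=\log\Phi(\bg)+Q(\epsilon_1,\epsilon_2,\epsilon_3)+O\Big(\max_j\|\epsilon_j\|_{p_j}^3\Big),
\end{equation*}
for a quadratic form $Q$ which is non-positive since $\bg$ is a maximizer. The kernel of $Q$ should equal the tangent space $T_{\bg}\scriptO_C$ of the symmetry orbit through $\bg$; the natural way to see this is to diagonalize $Q$ against Hermite functions, exploiting the Gaussian weights $g_j$. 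Because $\bh$ was chosen to nearly minimize distance to $\bg$, any component of $\epsilon$ along $T_{\bg}\scriptO_C$ is already negligible to leading order, so on the orthogonal complement one gets $Q(\epsilon)\le -c'\delta_0^2$, and the cubic remainder is absorbed for $\delta$ sufficiently small.

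\textbf{Far from the orbit.} When $\delta_0>\delta$, I argue by contradiction: if no uniform $c$ works then there exist $\bp_n\in K$ and triples $\bf^{(n)}$ with $|\Phi(\bf^{(n)})|\to\bA^d_{\bp_n}$ yet $\dist_{\bp_n}(\scriptO_C(\bf^{(n)}),\bg_{\bp_n})\ge\delta$. After passing to a subsequence with $\bp_n\to\bp_*\in K$ and using symmetries to normalize scale and position of each $\bf^{(n)}$, a concentration-compactness analysis \emph{à la} Lions (ruling out vanishing and dichotomy via the non-concentration of mass under convolution) should yield strong convergence to a limit $\bf^{(\infty)}$ that extremizes Young's inequality at $\bp_*$. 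By the Brascamp--Lieb characterization, $\bf^{(\infty)}\in\scriptO_C(\bg_{\bp_*})$, contradicting the distance bound.

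\textbf{Main obstacle.} The principal difficulty is the spectral analysis of $Q$: identifying its kernel exactly with $T_{\bg}\scriptO_C$ and establishing a strictly positive spectral gap on the orthogonal complement, uniformly in $\bp\in K$. A secondary difficulty is the non-compactness of the symmetry group, which must be managed both when selecting the near-optimal representative $\bh$ (to justify the quadratic lower bound on the loss) and when extracting a limit in the compactness argument (to prevent mass escape along translations, modulations, or dilations). Uniformity across $K$ leverages its compactness together with continuity in $\bp$ of both $\bA^d_\bp$ and the Gaussian maximizers $\bg_\bp$.
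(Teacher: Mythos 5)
The paper does not prove Theorem \ref{thm:ChristSY}; it is quoted verbatim from \cite{ChristSY}, and the present paper instead adapts that machinery to the twisted form $\T_A$. Your outline does capture the Bianchi--Egnell template that both Christ's proof and this paper use, but as written it contains two genuine gaps that are precisely the points where the real work happens.

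First, the second-order Taylor expansion $\log\Phi(\bh)=\log\Phi(\bg)+Q(\epsilon)+O(\max_j\|\epsilon_j\|_{p_j}^3)$ is not valid in $L^{p_j}$ when $p_j\in(1,2)$. The denominator $\prod_j\|g_j+\epsilon_j\|_{p_j}$ involves $\int|g_j+\epsilon_j|^{p_j}$, and the map $z\mapsto|z|^{p_j}$ is not $C^2$ at the origin for $p_j<2$; the remainder after the quadratic term in this expansion cannot be bounded by $\|\epsilon_j\|_{p_j}^3$. This is exactly why Christ (and, following him, this paper in \eqref{eq:def fsharp}) introduces the decomposition $f_j=f_{j,\sharp}+f_{j,\flat}$, treating the part dominated pointwise by $\eta g_j$ via a weighted-$L^2$ quadratic form and the large part $f_{j,\flat}$ by a separate argument that produces the favorable $-\tilde c\sum_j\|f_{j,\flat}\|_{p_j}^{p_j}$ gain in Proposition \ref{prop:only}. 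Without this decomposition the quadratic expansion step simply does not close.

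Second, choosing $\bh\in\scriptO_C(\bf)$ to nearly minimize $\max_j\|h_j-g_j\|_{p_j}$ does not imply that $\epsilon=\bh-\bg$ is (approximately) orthogonal to the tangent space of the orbit, nor does it deliver the specific orthogonality conditions one needs to run the spectral-gap argument for $Q$. Both Christ and the present paper handle this by a balancing lemma (Section 4 here): an implicit-function-theorem argument produces a possibly different representative $\tilde{\bh}\in\scriptO_C(\bf)$ satisfying exact orthogonality conditions, with the modification controlled at second order so the distance is only changed by $O(\delta_0^2)$. You flag the spectral analysis of $Q$ as the main obstacle, which is fair, but the obstacle lies at least as much in these two reductions as in computing the Hermite spectrum of $Q$ itself. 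The concentration-compactness step in your far-from-orbit case is also a substantial black box (it is the content of a separate precompactness theorem of Christ's), but as a sketch it is directionally fine.
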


One may instead state the above theorem in terms of the distance of a triple $\bf$ from the set of all triples of maximizers (that is, $\scriptO_C(\bg)$), as is done in \cite{ChristSY}. However, the distance defined in \eqref{eq:define distance} is more useful for analogy with our current analysis. 

It is also shown that the conclusion of Theorem \ref{thm:ChristSY} is true for $\bp\in(1,2]^3$ provided one does not require the same $c$ for all $\bp$ in a region. (However it is not known if this uniformity fails.) Furthermore, the conclusion in this particular quantitative form is false whenever any $p_j=1$ or $p_j>2$.

The purpose of this paper is to prove a similar quantitative stability result for twisted convolution. Let $t\geq0$ be a parameter and let $f_j\in L^{p_j}(\R^{2d})$, where $\R^{2d}$ is viewed as $\R^d\times\R^d=\{(x',x'''):x',x''\in\R^d\}$. Define the trilinear twisted convolution form with parameter $t$ as

\begin{equation}\label{eq:deftform}
\T_t(\bf):=\iint f_1(x)f_2(y)f_3(x+y)e^{it\sigma(x,y)}dxdy,
\end{equation}
where $\sigma(x,y)=x'\cdot y''-x''\cdot y'$ is the symplectic form. It is often useful to write $\sigma(x,y)=x^tJy$, where $J$ is the matrix
\begin{equation}
J=\begin{pmatrix}
0&I_d\\
-I_d&0
\end{pmatrix},
\end{equation}
and $I_d$ is the $d\times d$ identity matrix.

When $t=0$, \eqref{eq:deftform} becomes the trilinear form representing convolution. When $t\neq0$, it is obvious through the inequality 
\begin{equation}\label{eq:abs values}
|\T_t(\bf)|\leq\T(|\bf|)
\end{equation}
that $\T_t$ is bounded for any triple $\bp$ of exponents for which $\T$ is bounded. It is also known that for $t\neq0$, $\T(\bf,t)$ is also bounded for $(p_1,p_2,p_3)=(2,2,2)$ and the full range of exponents implied by interpolation (see Chapter XII.4 of \cite{SteinHA}, for instance). However, the particular conclusion we desire is false in the case $\sum_jp_j^{-1}\neq2$ since $\T_0=\T$ is unbounded. 


By \eqref{eq:abs values}, it is easy to see that $\T_t$ has norm at most $A_\bp^{2d}$, the optimal constant for Young's convolution inequality. Furthermore, the optimal constant may be seen to equal $A_\bp^{2d}$ by taking a triple of Gaussians which optimize Young's inequality and dilating them to concentrate at the origin so the oscillation of the twisting factor has negligible effect. However, no extremizers of $\T_t$ exist for fixed $t\neq0$. \cite{KleinRusso}


One challenge to dealing with the above form directly arises because the symmetry group of $\T$ contains the general linear group $Gl(2d)$, while $\T_t$ does not; the only linear transformations which preserve $\sigma$ are the symplectomorphisms. To avoid this issue, it helps to introduce the following trilinear form:

\begin{equation}\label{eq:defLform}
\T_A(\bf):=\iint f_1(x)f_2(y)f_3(x+y)e^{it\sigma(Ax,Ay)}dxdy,
\end{equation}
where $A:\R^{2d}\ra\R^{2d}$ is an arbitrary linear map. Replacing $x$ with $Lx$ and $y$ with $Ly$ for an invertible matrix $L$ sends $A$ to $A\circ L$, and the functional remains of the form \eqref{eq:defLform}. Boundedness properties of $\T_A$ follow directly from those of $\T_t$ and a change of coordinates.

The symmetries of $\T_A$ are similar to the those of $\T$ with some slight modifications, though they reduce to the symmetries of $\T(\bf)$ when $A=0$. Here, the symmetries preserve $|\Phi(\bf,A)|$, where $\Phi(\bf,A)=\frac{\T(\bf,A)}{\prod_j||f_j||_{p_j}}$.

\begin{itemize}
\item $(f_1,f_2,f_3,A)\mapsto(af_1,bf_2,cf_3,A)$, where $a,b,c\in\C$. (Scaling)
\item $(f_1,f_2,f_3,A)\mapsto(M_{\xi}f_1,M_{\xi}f_2,M_{-\xi}f_3,A)$. (Modulation)
\item $(f_1,f_2,f_3,A)\mapsto(M_{A^TJAv_2}\tau_{v_1}f_1,M_{-A^TJAv_1}\tau_{v_2}f_2,\tau_{v_1+v_2}f_3,A)$, where $A^T$ represents the transpose of the matrix $A$. (Translation/ Modulation Mix)
\item $(f_1,f_2,f_3,A)\mapsto\T(f_1\circ\psi,f_2\circ\psi,f_3\circ\psi,A\circ\psi)$, where $\psi\in Gl(d)$. (Diagonal Action of the General Linear Group)
\end{itemize}
Note that only the last of these symmetries alters $A$.

Let $\scriptO_{TC}(\bf,A)$ denote the orbit of $(\bf,A)$ under the above symmetries.

Now, it is less obvious how to represent the distance of $A$ from the zero transformation than it was when our parameter was just a real number $t$. One may naively suggest that $||A||$ will play a role, but this approach ignores the role of the symplectic group. The real symplectic group $Sp(2d)$ is defined as the set of invertible $(2d)\times(2d)$ matrices $S$ such that $S^TJS=J$. Equivalently, $Sp(2d)$ may be viewed as the set of coordinate changes which preserve $\sigma$. Under this view, we see that $\sigma(Ax,Ay)=\sigma(SAx,SAy)$ for any $S\in Sp(2d)$. Thus, replacing $A$ with $S\circ A$ should not change our distance.

With this in mind, define the distance from $\scriptO_{TC}(\bf,A)$ to $(\bg,0)$ by

\begin{equation}\label{eq:define TC distance}
\dist_\bp(\scriptO_{TC}(\bf,A),(\bg,0))^2:=\inf_{(\bh,M)\in\scriptO_{TC}(\bf,A)}\left[\max_j||h_j-g_j||_{p_j}^2+||M^TJM||^2\right]
\end{equation}

A useful fact in analyzing this distance is that $\inf_{S\in Sp(2d)}||S\circ A||^2=||A^TJA||$. (See Lemma 10.1 of \cite{ChristHeisenberg}.)  Define $||\bf||_\bp=\max_j||f_j||_{p_j}$. We now state our main theorem.

\begin{theorem}\label{theorem:main}
Let $K$ be a compact subset of $(1,2)^3$. For each $d\geq1$, there exists $c>0$ such that for all $\bp\in K$ with $\sum_{j=1}^3p_j^{-1}=2$, $\bf\in L^\bp(\R^{2d})$, and $(2d)\times(2d)$ matrices $A$,
\begin{equation}
|\T_A(\bf)|\leq \left(\bA^{2d}_\bp-c\dist_\bp(\scriptO_{TC}(\bf,A),(\bg,0))^2\right)\prod_j||f_j||_{p_j}.
\end{equation}
\end{theorem}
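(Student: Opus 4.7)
The strategy adapts Christ's proof of Theorem~\ref{thm:ChristSY}: a compactness argument reduces to a neighborhood of the extremal manifold $\{(\bg,0)\}$, and a second-order local expansion supplies the required quadratic term.

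\textbf{Compactness reduction.} Suppose the theorem fails. Then there exist $\bp_n \in K$, $\bf_n$, $A_n$, and $c_n\to 0^+$ violating the inequality. Pass to a subsequence with $\bp_n \to \bp_\infty \in K$. Since the listed symmetries preserve $\Phi$, replace $(\bf_n,A_n)$ with a representative of its orbit nearly realizing the infimum
\[
D_n := \dist_{\bp_n}(\scriptO_{TC}(\bf_n,A_n),(\bg_n,0)),
\]
so that $\max_j \|f_{j,n}-g_{j,n}\|_{p_{j,n}}^2+\|A_n^TJA_n\|^2 \le 2D_n^2$, and normalize $\|f_{j,n}\|_{p_{j,n}}=\|g_{j,n}\|_{p_{j,n}}$ via scaling. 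Then $|\T_{A_n}(\bf_n)|/\prod_j\|f_{j,n}\|_{p_{j,n}} \to \bA^{2d}_{\bp_\infty}$.

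\textbf{Reduction to the local regime.} From $|\T_{A_n}(\bf_n)|\le \T(|\bf_n|)$ and Theorem~\ref{thm:ChristSY} in dimension $2d$, $\dist_{\bp_n}(\scriptO_C(|\bf_n|),\bg_n)\to 0$. Every Young symmetry on the modulus $|\bf|$ is realized by a twisted symmetry (scaling and $GL$ are shared; modulation descends to the identity on moduli, and the mod/trans mix descends to pure translation), so after a further adjustment within $\scriptO_{TC}$, a subsequence has $|\bf_n|\to \bg_\infty$ in $L^{\bp_\infty}$. Phase normalization (scaling by a unit complex constant and modulation to subtract the constant and linear parts of $\arg f_j$) upgrades this to $\bf_n \to \bg_\infty$. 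Passing to a further subsequence, $B_n := A_n^TJA_n \to B^*$, and dominated convergence gives $|\T_{A_n}(\bf_n)|\to |\iint g_1 g_2 g_3\, e^{it\,x^T B^* y}\,dxdy|$; this limit equals $\T(\bg_\infty)$ iff $B^*=0$, since $|\cos(t\,x^T B^* y)|<1$ on a set of positive Gaussian measure when $B^*\ne 0$. Hence $\|A_n^TJA_n\|\to 0$, and by definition of the infimum $D_n\to 0$.

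\textbf{Local expansion.} With $\bf_n=\bg_n+\bv_n$ and $B_n$ small, expand
\[
e^{it\,x^T B_n y} = 1 + it\,x^T B_n y - \tfrac12 t^2(x^T B_n y)^2 + O(|B_n|^3|x|^3|y|^3)
\]
and integrate against $g_{1,n}(x)g_{2,n}(y)g_{3,n}(x+y)$. By Wick's theorem, the antisymmetry of $B_n$ collapses the linear-in-$B_n$ contribution to $\sum_k (B_n)_{kk}=0$, while the quadratic-in-$B_n$ contribution equals a strictly negative multiple of $\|B_n\|^2$, with coefficient uniform in $\bp \in K$ by continuity of the Gaussian covariances and compactness. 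Combined with the standard negative-definite Brascamp--Lieb second variation in $\bv$ at $\bg_n$ (the quadratic form underlying Christ's proof of Theorem~\ref{thm:ChristSY}),
\[
\frac{|\T_{A_n}(\bf_n)|}{\prod_j\|f_{j,n}\|_{p_{j,n}}} \le \bA^{2d}_{\bp_n} - c\bigl(\|\bv_n^\perp\|_{\bp_n}^2+\|B_n\|^2\bigr) + o(D_n^2),
\]
where $\bv_n^\perp$ is the component of $\bv_n$ transverse to the symmetry orbit at $\bg_n$. Since $\|\bv_n^\perp\|^2+\|B_n\|^2\gtrsim D_n^2$ for our chosen representative, this contradicts $c_n\to 0$.

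\textbf{Main obstacle.} The delicate step is the reduction to the local regime. The Young $GL$-transformations $\psi_n$ used to normalize $|\bf_n|$ rescale $B_n$ to $\psi_n^T B_n \psi_n$, so one must verify that the $\psi_n$ can be chosen bounded; equivalently, no near-infimum representative escapes to infinity in the $GL$-direction of the orbit. Combined with the phase normalization (which must be compatible with the $A$-dependent mod/trans mix) and the identity $\inf_{S\in Sp(2d)}\|SA\|^2=\|A^TJA\|$ recalled in the introduction, this is where the bulk of the technical work lies.
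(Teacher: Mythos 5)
Your skeleton --- reduce to a perturbative neighborhood of $(\bg,0)$, then expand to second order so the linear-in-$(A^TJA)$ term vanishes by antisymmetry and the quadratic term supplies $-c\|A^TJA\|^2$ --- matches the paper's strategy. But the steps you flag as obstacles are exactly where the paper invokes substantial machinery you have not supplied, and several intermediate claims have genuine gaps.

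The most serious gap is in the reduction to the local regime. You pass from $\dist_{\bp_n}(\scriptO_C(|\bf_n|),\bg_n)\to 0$ (correct, via $|\T_{A_n}(\bf_n)|\le\T(|\bf_n|)$ and Theorem~\ref{thm:ChristSY}) to $\bf_n\to\bg_\infty$ by a ``phase normalization.'' But closeness of the moduli $|f_{j,n}|$ to Gaussians imposes \emph{no} constraint on $\arg f_{j,n}$; phase information must come from near-equality in $|\T_{A_n}(\bf_n)|\le\T(|\bf_n|)$, which only constrains the combined phase $\arg f_1(x)+\arg f_2(y)-\arg f_3(x+y)+\sigma(A_nx,A_ny)$ on the region where the moduli are large. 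Decoupling this into near-affine phases for the individual $f_{j,n}$, controlling the coupling to $\sigma(A_nx,A_ny)$ (which forces the twisted-translation phase corrections $e^{i\sigma(\tilde a_j,x)}$), and simultaneously keeping the Young $GL$-normalizations $\psi_n$ bounded is precisely the content of Theorem~\ref{thm:perturbative1} from \cite{ChristHeisenberg}, which the paper imports as a black box to obtain Theorem~\ref{thm:perturbative2}. Your ``main obstacle'' paragraph names the $GL$-boundedness issue but omits the phase-extraction problem entirely and resolves neither.

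The local expansion also omits the cross terms. In $(\T_A-\T_0)(\bg+\bf)$ the mixed terms $(\T_A-\T_0)(f_1,g_2,g_3)$ carry a linear-in-$\sigma$ contribution $\iint f_1(x)g_2(y)g_3(x+y)\sigma(Ax,Ay)\,dx\,dy$, and one needs this to vanish for \emph{every} $f_1$ (the paper's Lemma~\ref{lemma:onesigma}), not just $f_1=g_1$; your Wick/trace observation only covers the latter. The terms $(\T_A-\T_0)(f_1,f_2,g_3)$ are formally third-order but only bounded trivially by $\|f_1\|\,\|f_2\|$, which is second order; the paper controls them via the $f_{j,\sharp}/f_{j,\flat}$ decomposition, Lemma~\ref{lemma:3rd order}, and the extra $-\tilde c\sum_j\|f_{j,\flat}\|_{p_j}^{p_j}$ term it extracts from Christ's proof (Proposition~\ref{prop:only}). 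Finally, the second variation is negative definite only transverse to the symmetry orbit, so before quoting it one must place $(\bf,A)$ in the correct orthogonality position; the paper's Balancing Lemma does this jointly in $\bf$ and $A$ (a representative minimizing $\|\bf\|_\bp$ alone need not minimize $\|\bf\|_\bp^2+\|A^TJA\|^2$), a point your sketch does not address.
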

By setting $A=t^{1/2}I_{2d}$ in Theorem \ref{theorem:main} (where $I_{2d}$ is the $(2d)\times(2d)$ identity matrix), one obtains the following corollary. However, one is cautioned that the orbit in this expression refers to the symmetries of $\T_A$, not those of $\T_t$.

\begin{corollary}\label{corollary:only}
Let $K$ be a compact subset of $(1,2)^3$. For each $d\geq1$, there exists $c>0$ such that for all $\bp\in K$ with $\sum_{j=1}^3p_j^{-1}=2$, $\bf\in L^\bp(\R^{2d})$, and $|t|\leq1$,
\begin{equation}
|\T_t(\bf)|\leq \left(\bA^{2d}_\bp-c\dist_\bp(\scriptO_{TC}(\bf,t^{1/2}I_{2d}),(\bg,0))^2\right)\prod_j||f_j||_{p_j}.
\end{equation}
\end{corollary}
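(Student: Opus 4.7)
My plan is to combine the existing sharp stability theorem for Young's convolution inequality (Theorem~\ref{thm:ChristSY}) with a direct local Taylor analysis of $\Phi(\bf,A) := |\T_A(\bf)|/\prod_j\|f_j\|_{p_j}$ near the Gaussian maximizer $(\bg,0)$, bridged by a compactness argument. Since $|\T_A(\bf)|\leq\T(|\bf|)\leq\bA_\bp^{2d}\prod_j\|f_j\|_{p_j}$, the point $(\bg,0)$ is a global maximum of $\Phi$, and it suffices to establish quadratic decay of $\Phi$ away from the orbit of this maximum.

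\textbf{Normal form and dichotomy.} Fix a small $\delta>0$. Applying symmetries of $\T_A$ and rescaling so $\|f_j\|_{p_j}=\|g_j\|_{p_j}$, we replace $(\bf,A)$ by a representative of its orbit nearly achieving the infimum in \eqref{eq:define TC distance}. Set $\eta:=\max_j\|f_j-g_j\|_{p_j}$ and $\beta:=\|A^TJA\|$, so $\eta^2+\beta^2\asymp\dist_\bp^2$. We split into the far regime $\eta^2+\beta^2\geq\delta^2$ and the near regime $\eta^2+\beta^2<\delta^2$.

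\textbf{Far regime.} Decompose the deficit as $[\bA_\bp^{2d}\prod_j\|f_j\|_{p_j}-\T(|\bf|)] + [\T(|\bf|)-|\T_A(\bf)|]$. If $|\bf|$ is bounded away from the Young orbit of $\bg$, Theorem~\ref{thm:ChristSY} controls the first piece and supplies the needed improvement. Otherwise, $|\bf|$ is close to $\bg$, and either $\beta$ is bounded below or $\bf$ has non-negligible nonlinear phase. The $\beta$ sub-case is handled by a compactness argument applied to $|\T_A(\bg)|$: this quantity is continuous in $B=A^TJA$, strictly less than $\bA_\bp^{2d}\prod_j\|g_j\|_{p_j}$ for all $B\neq 0$ (equality in $|\T_A(\bg)|\leq\T(\bg)$ would force $e^{ix^TBy}$ to be constant, impossible for $B\neq 0$), and decays at infinity by stationary phase, hence is bounded uniformly away from $\bA_\bp^{2d}$ on $\{\|B\|\geq\delta/2\}$; perturbation to $\bf$ preserves this gap. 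The nonlinear-phase sub-case yields the improvement through a quantitative version of the triangle inequality $|\int u|\leq\int|u|$ applied to $u=f_1(x)f_2(y)f_3(x+y)e^{ix^TBy}$.

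\textbf{Near regime.} Taylor expand $\Phi(\bg+\bv,A)$ about $(\bg,0)$. Linear terms vanish modulo tangent directions to the symmetry orbit, by criticality of the global maximum. Writing $B=A^TJA$ and $I_k:=\iint G(x^TBy)^k\,dx\,dy$ with $G(x,y)=g_1(x)g_2(y)g_3(x+y)$, expanding $e^{ix^TBy}$ gives $\T_A(\bg)=I_0+iI_1-\tfrac12 I_2+O(\|B\|^3)$. By radial symmetry of the Gaussians $g_j$, the moment matrix $M_1:=\iint G\,yx^T\,dx\,dy$ commutes with all rotations and hence is a scalar multiple of $I_{2d}$, so $I_1=\mathrm{tr}(BM_1)=0$ since $B$ is antisymmetric. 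Thus $|\T_A(\bg)|=I_0-\tfrac12 I_2+O(\|B\|^3)$, with $I_2$ a positive-definite quadratic form in $B$ bounded below by $c\beta^2$. The $\bv$-Hessian at $A=0$ coincides with the Young-type Hessian of \cite{ChristSY}, contributing $-c_1\eta^2$ in directions normal to the Young symmetries. Cross terms of size $\|\bv\|\cdot\beta$ are absorbed by AM--GM into the two quadratic pieces.

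\textbf{Main obstacle.} The principal difficulties are: (i) the quantitative phase-defect estimate needed in the far regime to handle $\bf$ with $|\bf|$ close to Gaussians but nonlinear phase, which Christ's theorem alone does not see; (ii) the explicit verification $I_2(B)\geq c\|B\|^2$ uniformly in the direction of antisymmetric $B$ and in $\bp\in K$; and (iii) assembly of the full Hessian so that it is negative definite in the quotient by the enlarged TC symmetry tangent space rather than merely the Young tangent space. Once these are established, the near regime yields $\Phi(\bf,A)\leq\bA_\bp^{2d}-c(\eta^2+\beta^2)$, and gluing with the far regime and invoking compactness of $K$ completes the proof of Theorem~\ref{theorem:main}; Corollary~\ref{corollary:only} follows by setting $A=t^{1/2}I_{2d}$.
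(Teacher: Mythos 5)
The corollary itself is, in both the paper and your proposal, a one-line consequence of Theorem~\ref{theorem:main}: substitute $A=t^{1/2}I_{2d}$. You correctly identify this deduction in your last sentence. The substance of your proposal is therefore a re-derivation of Theorem~\ref{theorem:main}, which does track the paper at a structural level (near/far dichotomy, Taylor expansion about $(\bg,0)$, matching the $I_1=0$ and $I_2\approx\|B\|^2$ computations of Lemmas~\ref{lemma:onesigma} and~\ref{lemma:twosigma}), but it leaves two genuine gaps.

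First, in the near regime, the dangerous terms are not the ones you name. The cross terms you propose to absorb by AM--GM are of size $\|\bv\|\cdot\beta$, and those are indeed harmless. But the expansion of $(\T_A-\T_0)(\bg+\bv)$ also produces terms like $(\T_A-\T_0)(g_1,v_2,v_3)=\iint g_1(x)v_2(y)v_3(x+y)\bigl(e^{i\sigma(Ax,Ay)}-1\bigr)dx\,dy$, with two $v_j$'s and one $g_j$. Because $\sigma(Ax,Ay)$ is an unbounded multiplier and $v_j\in L^{p_j}$ have no pointwise decay, the naive bound on this term is $O(\|v_2\|_{p_2}\|v_3\|_{p_3})$ with no extra factor of $\beta$. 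That is second order, not third, and it does not tend to zero as $A\to 0$; absorbing it into the quadratic pieces would destroy the constant. The paper's Lemma~\ref{lemma:3rd order} is precisely the device that shows this term is $o(\|\bf\|_\bp^2+\|A^TJA\|^2)$, and it requires both the $f_{j,\sharp}/f_{j,\flat}$ decomposition and a logarithmic truncation of supports, neither of which appears in your sketch. Relatedly, you do not say how the minimizing representative in \eqref{eq:define TC distance} is realigned with the orthogonality conditions that make the Young Hessian of \cite{ChristSY} applicable; the paper's Balancing Lemma does this work.

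Second, in the far regime you invoke a ``quantitative version of the triangle inequality'' for the nonlinear-phase sub-case, but no such estimate is stated or proved, and a compactness argument on $|\T_A(\bg)|$ alone cannot close this: the space of admissible $\bf$ is infinite-dimensional and not compact, and functions with $|\bf|$ close to $\bg$ but with oscillatory phase form a genuinely non-compact family. The paper does not attempt this directly; it imports Theorem~\ref{thm:perturbative1} from \cite{ChristHeisenberg}, a substantial qualitative stability result, and shows (Theorem~\ref{thm:perturbative2}) that it forces $\dist_\bp(\scriptO_{TC}(\bf,A),(\bg,0))$ to be small whenever $|\T_A(\bf)|$ is nearly extremal, after which the far regime becomes trivial because the distance is also uniformly bounded above. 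Without an input of this strength, your far-regime argument is incomplete.
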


The reason one uses $t^{1/2}I_{2d}$ rather than $tI_{2d}$ is so the $||M^TJM||^2$ term appearing in \eqref{eq:define TC distance} is proportional to $t^2$, rather than $t^4$. An alternative form of Corollary \ref{corollary:only} states the function $\epsilon(\delta)$ in Theorem \ref{thm:perturbative1} may be taken to be $C\sqrt{\delta}$ for some $C>0$.


The methods in this paper follow the general approach found in \cite{ChristSY} and \cite{BE} in which one takes a Taylor-like expansion of the given operator and diagonalizes the resulting quadratic form.

We will often use $C$ or $c$ to denote an arbitrary constant in $(0,\infty)$ which may change from line to line but always be independent of functions found in the equation.


\section{Reduction to Perturbative Case}

Our argument centers around an expansion of $\T(\bf,A)$ which requires a reduction to small perturbations. To this end, the following result from \cite{ChristHeisenberg} is essential.


\begin{theorem}\label{thm:perturbative1}
Let $d\geq 1$. Let $K$ be a compact subset of $(1,2)^3$ for which each $\bp\in K$ satisfies $\sum_{j=1}^3p_j^{-1}=2$. Then, there exists a function $\delta\mapsto\epsilon(\delta)$ (depending only on $K$ and $d$) satisfying $\lim_{\delta\ra0}\epsilon(\delta)=0$ with the following property. Let $\bf\in L^\bp(\R^{2d})$ and suppose that $||f_j||_{p_j}\neq0$ for each $1\leq j\leq3$. Let $\delta\in(0,1)$ and suppose that $|\T(\bf,t)|\geq(1-\delta)\bA_\bp^{2d}\prod_j||f_j||_{p_j}$. Then there exist $S\in Sp(2d)$ and a triple of Gaussians $\mathbf{G}=(G_1,G_2,G_3)$ such that $G_j^\natural=G_j\circ S$ satisfy
\begin{equation}
||f_j-G_j^\natural||_{p_j}<\epsilon(\delta)||f_j||_{p_j}
\end{equation}
for $1\leq j\leq3$ and
\begin{equation}
G_j(x)=c_je^{\pi p_j'|L(x-a_j)|^2}e^{ix\cdot v}e^{it\sigma(\tilde{a_j},x)}
\end{equation}
where $v\in\R^{2d}, 0\neq c_j\in\C, a_1+a_2+a_3=0, \tilde{a_3}=0,\tilde{a_1}=a_2,\tilde{a_2}=a_1, L\in Gl(2d)$, and
\begin{equation}
|t|\cdot||L^{-1}||^2\leq\epsilon(\delta).
\end{equation}
\end{theorem}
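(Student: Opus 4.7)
The plan is to bootstrap Christ's stability theorem for Young's inequality (Theorem~\ref{thm:ChristSY}) and then use the near-equality condition in \eqref{eq:abs values} to pin down the phase and the oscillation scale. The inequality \eqref{eq:abs values} gives
\[
\T(|f_1|,|f_2|,|f_3|) \;\geq\; |\T_t(\bf)| \;\geq\; (1-\delta)\bA_\bp^{2d}\prod_j \|f_j\|_{p_j},
\]
so $(|f_1|,|f_2|,|f_3|)$ is an $O(\sqrt\delta)$-near-extremizer for ordinary convolution on $\R^{2d}$. Theorem~\ref{thm:ChristSY} then produces constants $c_j>0$, translations $a_j$ with $a_1+a_2+a_3=0$, and an invertible $L$ such that $\bigl\||f_j| - c_j e^{-\pi p_j' |L(\cdot - a_j)|^2}\bigr\|_{p_j} \leq \epsilon_0(\delta)\|f_j\|_{p_j}$ with $\epsilon_0(\delta)\to 0$.

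To extract the complex phase, expand the symplectic form about the centers $a_j$ via
\[
\sigma(x,y) \;=\; \sigma(x-a_1,\,y-a_2) + \sigma(x,a_2) + \sigma(a_1,y) - \sigma(a_1,a_2).
\]
The terms $\sigma(x,a_2)$ and $\sigma(a_1,y)$ are linear in $x$ and $y$, so $e^{it\sigma(x,a_2)}$ and $e^{it\sigma(a_1,y)}$ act as modulations of $f_1$ and $f_2$. The gap $\T(|\bf|) - |\T_t(\bf)| \leq \delta\bA_\bp^{2d}\prod_j\|f_j\|_{p_j}$ forces the combined phase $\arg f_1(x)+\arg f_2(y)+\arg f_3(x+y)+t\sigma(x,y)$ to be nearly constant on a set carrying definite mass of the Young integrand. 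The algebraic consistency at $y=(x+y)-x$ then constrains each $\arg f_j$ to be approximately affine with parameters matching the template in the statement, yielding in particular $\tilde a_1=a_2$, $\tilde a_2=a_1$, $\tilde a_3=0$ and a common modulation $v$.

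The most delicate step, which I expect to be the main obstacle, is the bound $|t|\cdot\|L^{-1}\|^2 \leq \epsilon(\delta)$. Normalizing the candidate Gaussians to unit scale via the $Gl(2d)$ action $\psi = L^{-1}$ transforms the twisting factor into $e^{it\,u^T L^{-T}JL^{-1} w}$, and an explicit Gaussian computation shows
\[
|\T_t(\mathbf{G})| \;\leq\; \bA_\bp^{2d}\prod_j \|G_j\|_{p_j} - c\bigl(|t|\,\|L^{-T}JL^{-1}\|\bigr)^2,
\]
with nondegenerate quadratic loss since $L^{-1}$ is invertible and $J$ is nonsingular. Combined with the first two steps this quantitatively forces $|t|\,\|L^{-T}JL^{-1}\|\to 0$. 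Finally, the symplectic $S\in Sp(2d)$ enters because $\T_t$ is $Sp(2d)$-invariant: choosing $S$ to realize the infimum $\inf_{S'}\|S'\circ L^{-1}\|^2 = \|L^{-T}JL^{-1}\|$ recalled in the introduction converts the above into the claimed bound $|t|\cdot\|L^{-1}\|^2 \leq \epsilon(\delta)$ for the post-normalization $L$, completing the proof.
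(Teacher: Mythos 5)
This theorem is quoted, not proved, in the paper: it is cited from \cite{ChristHeisenberg}, so there is no in-paper argument to compare your sketch against. Evaluating the sketch on its own terms: the opening reduction via \eqref{eq:abs values} and Theorem~\ref{thm:ChristSY} to pin down the moduli $|f_j|$ is reasonable, and so is the identity you write for the symplectic form about $(a_1,a_2)$. The genuine gap is the phase-rigidity step. You pass in a single sentence from ``the combined phase $\arg f_1(x)+\arg f_2(y)+\arg f_3(x+y)+t\sigma(x,y)$ is nearly constant on a set carrying definite mass'' to ``each $\arg f_j$ is approximately affine with exactly the stated parameters.'' That inference is a quantitative rigidity theorem for an approximate three-variable Cauchy--Pexider equation and is the hard core of the whole statement, not a consequence of ``algebraic consistency at $y=(x+y)-x$.'' The difficulty is compounded by the fact that, after your recentring, the residual bilinear piece $t\sigma(x-a_1,y-a_2)$ cannot be written in the additive form $\alpha(x)+\beta(y)+\gamma(x+y)$, so it genuinely obstructs the functional equation; formally, differentiating the exact phase identity forces $\nabla\phi_1(x)=v-tJx$, which is not a gradient for $t\neq0$, so the conclusion $|t|\,\|L^{-1}\|^2=o(1)$ has to be \emph{extracted from} the rigidity analysis rather than verified against it. Your alternate route to that bound, via the displayed Gaussian computation, is also not quite right as written: $|\T_t(\mathbf{G})|\leq \bA^{2d}_\bp\prod_j\|G_j\|_{p_j}-c\bigl(|t|\,\|L^{-T}JL^{-1}\|\bigr)^2$ cannot hold for all parameter values, since the left side is nonnegative while the right side eventually becomes negative, so you would need to restrict to a perturbative window and handle the large-twist regime separately (there the left side decays exponentially, so a cruder bound suffices), and you would also need to transfer from the model triple $\mathbf{G}$ back to $\bf$ before the estimate controls $\delta$. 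None of these obstacles is clearly insurmountable, but as presented they are the entire substance of the theorem and are not addressed.
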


Here is a rephrasing of Theorem \ref{thm:perturbative1}.

\begin{theorem}\label{thm:perturbative2}
Let $d\geq 1$. Let $K$ be a compact subset of $(1,2)^3$ for which each $\bp\in K$ satisfies $\sum_{j=1}^3p_j^{-1}=2$. Then, there exists a function $\delta\mapsto\epsilon(\delta)$ (depending only on $K$ and $d$) satisfying $\lim_{\delta\ra0}\epsilon(\delta)=0$ with the following property. Let $\bf\in L^\bp(\R^{2d})$ and suppose that $||f_j||_{p_j}\neq0$ for each $1\leq j\leq3$. Let $\delta\in(0,1)$ and suppose that $|\T_A(\bf)|\geq(1-\delta)\bA_\bp^{2d}\prod_j||f_j||_{p_j}$. Then,
\begin{equation}\label{eq:pert2conc}
\dist_\bp(\scriptO_{TC}(\bf,A),(\bg,0))<\epsilon(\delta)
\end{equation}
\end{theorem}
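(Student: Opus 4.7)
The plan is to derive Theorem \ref{thm:perturbative2} from Theorem \ref{thm:perturbative1} via a change of variables that reduces the general linear map $A$ to the scalar form $t^{1/2} I_{2d}$ (to which Theorem \ref{thm:perturbative1} applies), and then to repackage the conclusion in terms of the TC-distance.

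For the reduction, observe that $\T_A(\bf)$ depends on $A$ only through the antisymmetric matrix $B := A^T J A$, since $\sigma(Ax,Ay) = x^T B y$. Assume first that $A$ is invertible (the singular case is handled by perturbing $A$ to invertible matrices and using continuity of both sides). Since $B$ is nondegenerate antisymmetric, the classification of such forms over $\R$ yields $\psi \in GL(2d)$ and $t > 0$ with $\psi^T B \psi = tJ$, equivalently $A\psi = \sqrt{t}\,S^{-1}$ for some $S \in Sp(2d)$. Substituting $x = \psi u$, $y = \psi v$ in \eqref{eq:defLform}---and using that left multiplication of $A$ by $Sp(2d)$ leaves $\T_A$ invariant, since symplectomorphisms preserve $\sigma$---gives $\T_A(\bf) = |\det\psi|^2\,\T_t(\bf\circ\psi)$, while the Young condition $\sum p_j^{-1} = 2$ forces $\prod_j \|f_j\circ\psi\|_{p_j} = |\det\psi|^{-2}\prod_j\|f_j\|_{p_j}$. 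The normalized ratio $\Phi$ is thus preserved, and the hypothesis becomes $|\T_t(\bf')| \geq (1-\delta)\bA_\bp^{2d}\prod_j\|f_j'\|_{p_j}$, where $\bf' := \bf\circ\psi$.

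Theorem \ref{thm:perturbative1} applied to $(\bf', t)$ then produces $S' \in Sp(2d)$, Gaussians $G_j$ with parameters $c_j, L, a_j, v, \tilde a_j$ satisfying the algebraic constraints, $\|f_j' - G_j\circ S'\|_{p_j} < \epsilon(\delta)\|f_j'\|_{p_j}$, and $|t|\,\|L^{-1}\|^2 \leq \epsilon(\delta)$. The form of $G_j$ is engineered so that $\bg$ is its image under the TC symmetries; concretely, I act on $(\bf, A)$ by the diagonal-$GL$ TC symmetry $\psi\circ\tilde L$ with $\tilde L := S'^{-1}L^{-1}$, followed by the modulation, translation/modulation-mix, and scaling symmetries (with parameters chosen to cancel $v$, $\tilde a_j$, $a_j$, $c_j$ in $G_j \circ S' \circ \tilde L$). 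The resulting orbit representative $(\bh, M) \in \scriptO_{TC}(\bf, A)$ satisfies $\max_j\|h_j - g_j\|_{p_j} \leq C\epsilon(\delta)$, and
\[
M^T J M \;=\; \tilde L^T (A\psi)^T J (A\psi)\,\tilde L \;=\; t\,\tilde L^T J\tilde L \;=\; t\,L^{-T}J L^{-1},
\]
where the last two equalities use $S \in Sp(2d)$ and $S' \in Sp(2d)$ preserve $J$. Hence $\|M^T J M\| \leq |t|\,\|L^{-1}\|^2 \leq \epsilon(\delta)$. Combining gives $\dist_\bp(\scriptO_{TC}(\bf,A),(\bg,0))^2 \leq C\epsilon(\delta)^2$, which is \eqref{eq:pert2conc} after replacing $\epsilon$ by $C^{1/2}\epsilon$.

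The principal obstacle is the symmetry bookkeeping in the final paragraph: verifying that $G_j$ with its particular $e^{it\sigma(\tilde a_j,x)}$ phase, and the constraints $a_1+a_2+a_3 = 0$, $\tilde a_3 = 0$, $\tilde a_1 = a_2$, $\tilde a_2 = a_1$, lies precisely in the TC orbit of $\bg$. After the reduction the twist matrix $A^TJA$ equals $tJ$, and the coupling between translations and induced modulations in the translation/modulation-mix symmetry is exactly what matches the $\tilde a_j$ data. A secondary matter is the non-invertible case, dealt with by the density argument mentioned above, or alternatively by replacing the normal form $\psi^TB\psi = tJ$ by its rank-stratified analogue.
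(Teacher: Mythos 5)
Your proof is correct and takes essentially the same approach as the paper: compose with an invertible linear map to bring $\T_A$ into $\T_t$ form, apply Theorem \ref{thm:perturbative1}, and read off the distance bound from the parameters $S',L,t$ that it produces. The only cosmetic difference is that you normalize $A^TJA$ to $tJ$ via the symplectic normal form while the paper simply takes $\psi = A^{-1}$ (so $t=1$); your estimate $\|M^TJM\| = |t|\,\|L^{-T}JL^{-1}\| \le |t|\,\|L^{-1}\|^2 \le \epsilon(\delta)$ matches the paper's bound $\inf_{S\in Sp(2d)}\|S S_0^{-1}L^{-1}\|^4 \le \|L^{-1}\|^4 \le \epsilon(\delta)^2$, and the symmetry bookkeeping you flag as the main obstacle is also left implicit in the paper's proof.
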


\begin{proof}[Proof of Theorem \ref{thm:perturbative1} $\Rightarrow$ Theorem \ref{thm:perturbative2}]
By a standard approximation argument, it suffices to prove Theorem \ref{thm:perturbative2} for invertible maps $A$, as each noninvertible map is arbitrarily close to an invertible map.


Suppose that $|\T(\bf,A)|\geq(1-\delta)\bA_\bp^{2d}\prod_j||f_j||_{p_j}$. Then invoking the symmetry of diagonal action of the general linear group, 
\begin{equation}\label{eq:filler label}
|\T(\bf\circ A^{-1},I_{2d})|\geq(1-\delta)\bA_\bp^{2d}\prod_j||f_j\circ A^{-1}||_{p_j},
\end{equation}
where $\bf\circ A^{-1}=(f_j\circ A^{-1}:j=1,2,3)$.

Applying Theorem \ref{thm:perturbative1} under the case $t=1$, there exists $S_0\in Sp(2d)$ and a triple of Gaussians $\mathbf{G}=(G_1,G_2,G_3)$ such that
\begin{equation}\label{eq:norm control}
||f_j\circ A^{-1}-G_j\circ S_0||_{p_j}<\epsilon(\delta)||f_j\circ A^{-1}||_{p_j}
\end{equation}
for $1\leq j\leq3$ and
\begin{equation}
G_j(x)=c_je^{\pi p_j'|L(x-a_j)|^2}e^{ix\cdot v}e^{it\sigma(\tilde{a_j},x)}
\end{equation}
where $v\in\R^{2d}, 0\neq c_j\in\C, a_1+a_2+a_3=0, \tilde{a_3}=0,\tilde{a_1}=a_2,\tilde{a_2}=a_1, L\in Gl(2d)$, and
\begin{equation}
||L^{-1}||^2\leq\epsilon(\delta).
\end{equation}

By a combination of translations, modulations, scalings, and compositions with invertible linear maps, \eqref{eq:norm control} becomes
\begin{equation}
||h_j-g_j||_{p_j}<\epsilon(\delta)||h_j||_{p_j},
\end{equation}
where $h_j$ is $f_j\circ A^{-1}$ composed with said operations.

Since $\mathbf{G}$ was the composition of $\bg$ with the stated symmetries of $\T_A$, we see that $\bh$ is obtained by the composition of $\bf\circ A^{-1}$ with symmetries of $\T_A$ by the following reasoning. Three of these symmetries (scaling, modulation, and the diagonal action of the general linear group) may trivially be inverted by symmetries of the same form. To address the inversion of the translation/modulation mix, one observes that $\tau_{w_j}M_{B^TJB\tilde{w_j}}f=e^{iB^TJB\tilde{w_j}\cdot w_j}M_{B^TJB\tilde{w_j}}\tau_{w_j}f$ for matrices $B$ and vectors $w_j$. Hence, $\bh$ is obtained from $\bf\circ A^{-1}$ through the inverses of the symmetries applied initially to $\bg$ to obtain $\mathbf{G}$ but with an additional scaling symmetry.

The only above symmetry which changes the matrix $B$ in $\T_B$ is the diagonal action of the general linear group. Following the use of this symmetry above, one obtains from \eqref{eq:filler label} that $(\bh,S_0^{-1}\circ L^{-1})\in\scriptO_{TC}(\bf,A)$.

We now see that
\begin{align*}
\dist_\bp(\scriptO_{TC}(\bf,A),(\bg,0))^2&\leq\max_j||h_j-g_j||_{p_j}^2+\inf_{S\in Sp(2d)}||S\circ  S_0^{-1}\circ L^{-1}||^4\\
&\leq \epsilon(\delta)^2+||S_0S_0^{-1}\circ L^{-1}||^4\\
&\leq \epsilon(\delta)^2+||L^{-1}||^4\leq 2\epsilon(\delta)^2.
\end{align*}


%
\end{proof}


As a corollary to Theorem \ref{thm:perturbative2}, it suffices to prove Theorem \ref{theorem:main} in the case in which $\dist_\bp(\scriptO_{TC}(\bf,A),(\bg,0))<\delta_0$ for some $\delta_0>0$. Theorem \ref{thm:perturbative2} guarantees that there are no sequences of $(\bf_n,A_n)$ at distance greater than $\delta_0$ such that $\T_{A_n}(\bf_n)/(\prod_j||f_{n,j}||_{p_j})$ converges to $A_\bp^{2d}$. Thus, for $(\bf,A)$ at distance at least $\delta_0$, $\T_A(\bf)$ must have a maximum strictly less than $A_\bp^{2d}$. While $||A^TJA||\ra\infty$ for an appropriate sequence of matrices $A$, $\dist_\bp(\scriptO_{TC}(\bf,A),(\bg,0))$ remains bounded above as th symmetries of $\T_A$ ensure there exists $(\bh,M)\in\scriptO_{TC}(\bf,A)$ with $||M^TJM||\leq1$. Therefore, the conclusion of Theorem \ref{theorem:main} holds for distances greater than $\delta_0$.

\section{Treating Some Terms of the Expansion}

In this section, we consider $\T_A(\bg+\bf)$, where $A$ is a $(2d)\times(2d)$ matrix, $\bg =(g_j=e^{-\pi p_j'|x|^2}: j=1,2,3)$ and $\bf\in L^{\bp}(\R^{2d})$ are small perturbations. (This change in notation of $\bf$ from functions close to $\bg$ to the differences will continue for the remainder of the paper.) As in \cite{ChristSY}, we may assume $\int g_j^{p_j-1}f_j=0$ via the scaling symmetry.

In short, we will expand $\T(\bg+\bf,A)=\T_0(\bg+\bf)+(\T_A-\T_0)(\bg+\bf)$ and use the multilinearity of $\T_0$ and $\T_A$ to get sixteen terms of eight different types. Before writing out the expansion, we prove a few lemmas about its terms and describe a useful decomposition.

Following \cite{ChristHY} and \cite{ChristSY}, let $\eta>0$ be a small parameter to be chosen later (see Proposition \ref{prop:only}). For each $1\leq j\leq3$, decompose $f_j=f_{j,\sharp}+f_{j,\flat}$, where
\begin{equation}\label{eq:def fsharp}
f_{j,\sharp}=\left\{
     \begin{array}{lr}
       f_j(x) \text{ if }|f_j(x)|\leq \eta g_j(x)\\
       0 \text{ otherwise, }
     \end{array}
   \right.
\end{equation}
and $f_{j,\flat}=f_j-f_{j,\sharp}$. The purpose of this decomposition is twofold. First, it is used in the analysis of \cite{ChristSY} to analyze the quadratic form in the expansion with $L^2$ functions. Using the same decomposition allows us to borrow from that analysis in Proposition \ref{prop:only}, a version of Theorem \ref{thm:ChristSY} with an additional favorable term. Second, the decomposition is used to reduce to the case of $f_j=f_{j,\sharp}$, which concentrates closer to the origin, allowing for control of the third order term in Lemma \ref{lemma:3rd order}.

\begin{lemma}\label{lemma:trivial}
$(\T_A-\T_0)(\bf)=O(||\bf||_\bp^3).$
\end{lemma}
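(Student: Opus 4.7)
The name \emph{trivial} attached to this lemma is a strong hint that the bound should fall directly out of Young's convolution inequality applied to both forms separately, without using any cancellation coming from the oscillatory factor $e^{i\sigma(Ax,Ay)}-1$. That is what I would exploit.

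My plan is to begin with the triangle inequality
\[
|(\T_A - \T_0)(\bf)| \leq |\T_A(\bf)| + |\T_0(\bf)|.
\]
Each term on the right can be bounded by the same quantity: by the pointwise estimate \eqref{eq:abs values} (or rather its analogue $|\T_A(\bf)| \leq \T_0(|\bf|)$, which holds for the same reason, since $|e^{i\sigma(Ax,Ay)}| = 1$), together with Young's convolution inequality \eqref{eq:Young's trilinear} applied on $\R^{2d}$, we have
\[
|\T_A(\bf)| \leq \bA^{2d}_\bp \prod_{j=1}^3 \|f_j\|_{p_j}, \qquad |\T_0(\bf)| \leq \bA^{2d}_\bp \prod_{j=1}^3 \|f_j\|_{p_j}.
\]

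To convert a product of norms into the cube of the max-norm, I would use the elementary inequality
\[
\prod_{j=1}^3 \|f_j\|_{p_j} \leq \Bigl(\max_j \|f_j\|_{p_j}\Bigr)^{3} = \|\bf\|_\bp^3.
\]
Combining the three displayed inequalities yields
\[
|(\T_A - \T_0)(\bf)| \leq 2\bA^{2d}_\bp \|\bf\|_\bp^3,
\]
which is exactly the $O(\|\bf\|_\bp^3)$ conclusion, with an implicit constant depending only on $\bp$ (and uniform on compact subsets of $(1,2)^3$ satisfying $\sum p_j^{-1} = 2$), in particular independent of $A$.

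There is no real obstacle here, which is precisely why the lemma is labeled \emph{trivial}: the estimate does not require any delicate use of the twisting factor or any expansion in $A$, and it will be applied later to the highest-order pieces of the expansion of $\T_A(\bg + \bf)$, where $\bf$ is small and three factors of $\|\bf\|_\bp$ are already enough. The finer lemmas (such as Lemma \ref{lemma:3rd order} and the analysis of the quadratic form) will carry the burden of exploiting the oscillation in $A$.
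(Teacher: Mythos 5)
Your argument is exactly the one the paper has in mind: the paper's proof is the one-liner ``This claim follows trivially from the uniform boundedness of $\T_A$ and $\T_0$,'' and you have simply unpacked that uniform boundedness into the triangle inequality, Young's inequality for each of $\T_A$ and $\T_0$, and the bound $\prod_j\|f_j\|_{p_j}\leq\|\bf\|_\bp^3$. Correct, and same approach.
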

\begin{proof}
This claim follows trivially from the uniform boundedness of $\T_A$ and $\T_0$.
\end{proof}

The following lemma represents our main use of the $f_j=f_{j,\sharp}+f_{j,\flat}$ decomposition and the swapping of $f_j$ for $f_{j,\sharp}$ will be justified later.

\begin{lemma}\label{lemma:3rd order}
$(\T_A-\T_0)(f_{1,\sharp},f_{2,\sharp},g_3)=o(||\bf||_\bp^2+||A^TJA||^2)$ with decay rate depending only on $\eta$.
\end{lemma}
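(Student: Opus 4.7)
The plan is to Taylor-expand the oscillatory factor $e^{i\sigma(Ax,Ay)}-1$ to two orders, apply the trilinear Young's inequality with polynomial weights $|x|^{k},|y|^{k}$, and then control the weighted norms $||f_{j,\sharp}\cdot|x|^{k}||_{p_{j}}$ through a H\"older interpolation that trades the pointwise Gaussian majorization $|f_{j,\sharp}|\leq\eta g_{j}$ against the $L^{p_{j}}$-smallness of $f_{j,\sharp}$. Setting $Q:=A^{T}JA$ so that $\sigma(Ax,Ay)=x^{T}Qy$ and $|x^{T}Qy|\leq||Q||\,|x|\,|y|$, I write $e^{i\theta}-1=i\theta+\psi(\theta)$ where $|\psi(\theta)|\leq C\theta^{2}$ holds uniformly in $\theta\in\R$ (second-order Taylor for $|\theta|\leq 1$, and $|\psi(\theta)|\leq 2+|\theta|\leq 3\theta^{2}$ for $|\theta|>1$). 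This decomposes $(\T_{A}-\T_{0})(f_{1,\sharp},f_{2,\sharp},g_{3})=I_{1}+I_{2}$, where $|I_{1}|$ is bounded by $||Q||$ times the weighted trilinear integral with factors $|x|,|y|$, and $|I_{2}|$ by $C||Q||^{2}$ times the analogous integral with factors $|x|^{2},|y|^{2}$.

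The next step is to apply Young's convolution inequality in $\R^{2d}$ to each weighted trilinear form, giving bounds of the shape $C\,||Q||^{k}\cdot||f_{1,\sharp}|x|^{k}||_{p_{1}}\cdot||f_{2,\sharp}|y|^{k}||_{p_{2}}\cdot||g_{3}||_{p_{3}}$ for $k=1,2$. The key auxiliary estimate to prove is
\[
||f_{j,\sharp}\,|x|^{k}||_{p_{j}}\leq\eta^{\alpha}\,C_{\alpha,k}\,||f_{j,\sharp}||_{p_{j}}^{1-\alpha}\qquad\text{for any }\alpha\in(0,1),
\]
which follows by factoring $|f_{j,\sharp}|^{p_{j}}=|f_{j,\sharp}|^{p_{j}(1-\alpha)}\cdot|f_{j,\sharp}|^{p_{j}\alpha}$, bounding the second factor pointwise by $(\eta g_{j})^{p_{j}\alpha}$, and then applying H\"older with conjugate exponents $1/(1-\alpha),1/\alpha$; the constant $C_{\alpha,k}$ is the finite Gaussian moment integral $(\int g_{j}^{p_{j}}|x|^{kp_{j}/\alpha}\,dx)^{\alpha/p_{j}}$. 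Combined with $||f_{j,\sharp}||_{p_{j}}\leq||\bf||_{\bp}$, this yields $|I_{1}|\leq C_{\eta}||Q||\,||\bf||_{\bp}^{2(1-\alpha)}$ and $|I_{2}|\leq C_{\eta}||Q||^{2}\,||\bf||_{\bp}^{2(1-\alpha)}$.

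To close the argument, I will fix any $\alpha\in(0,1/2)$ (for concreteness $\alpha=1/4$) and apply AM-GM. For $I_{1}$, write
\[
||Q||\,||\bf||_{\bp}^{2-2\alpha}=||\bf||_{\bp}^{1-2\alpha}\cdot(||Q||\,||\bf||_{\bp})\leq\tfrac{1}{2}||\bf||_{\bp}^{1-2\alpha}\bigl(||Q||^{2}+||\bf||_{\bp}^{2}\bigr),
\]
and observe that $||\bf||_{\bp}^{1-2\alpha}\to 0$ as $||\bf||_{\bp}\to 0$ since $1-2\alpha>0$; for $I_{2}$ we have directly $||Q||^{2}||\bf||_{\bp}^{2(1-\alpha)}\leq||\bf||_{\bp}^{2(1-\alpha)}(||Q||^{2}+||\bf||_{\bp}^{2})$, with vanishing coefficient. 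Both contributions are therefore $o(||\bf||_{\bp}^{2}+||A^{T}JA||^{2})$ uniformly as the perturbation and $A$ shrink, with rate depending only on $\eta$ and the fixed choice of $\alpha$.

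The main obstacle will be the linear-in-$Q$ term $I_{1}$: a naive pointwise estimate using only $|f_{j,\sharp}|\leq\eta g_{j}$ yields the bound $O(||Q||)$, which is not $o(||Q||^{2})$. The whole scheme hinges on extracting a factor of $||\bf||_{\bp}^{2(1-\alpha)}$ by simultaneously using \emph{both} the smallness of $f_{j,\sharp}$ in $L^{p_{j}}$ and its pointwise Gaussian majorant, while paying only a harmless $\eta^{\alpha}$-penalty in H\"older. This is precisely why the lemma restricts to $f_{j,\sharp}$ rather than the full perturbations $f_{j}$ and why the exponent $\alpha$ must be kept strictly below $1/2$—so that the coefficient $||\bf||_{\bp}^{1-2\alpha}$ in the AM-GM step indeed vanishes in the perturbative limit.
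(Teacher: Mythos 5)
Your proof is correct and takes a genuinely different route from the paper's. The paper's argument is a two-case analysis: if $||A^TJA||^3$ dominates $||f_{1,\sharp}||_{p_1}||f_{2,\sharp}||_{p_2}$, the trivial bound \eqref{eq:trivial bound} suffices; otherwise, each $f_{j,\sharp}$ is split by a spatial cutoff at radius $M_j$ chosen so that $||f_{j,\sharp,>M_j}||_{p_j}=||f_{j,\sharp}||_{p_j}^2$, and the pointwise majorant $|f_{j,\sharp}|\leq\eta g_j$ is exploited to prove $M_j\lesssim\log(||f_{j,\sharp}||_{p_j}^{-1})$. The tail pieces are then handled by the trivial bound, and the main piece by estimating $|\sigma(Ax,Ay)|\leq||A^TJA||M_1M_2$ on $B(0,M_1)\times B(0,M_2)$, which with the case hypothesis yields a bound of the form $||f_{1,\sharp}||_{p_1}^{4/3}||f_{2,\sharp}||_{p_2}^{4/3}\log(\cdots)\log(\cdots)=o(||\bf||_\bp^2)$.

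Your argument replaces all of this with a single H\"older interpolation: $||f_{j,\sharp}|x|^k||_{p_j}\leq\eta^\alpha C_{\alpha,k}||f_{j,\sharp}||_{p_j}^{1-\alpha}$, obtained by splitting $|f_{j,\sharp}|^{p_j}$ into a piece measured in $L^{p_j}$ and a piece bounded pointwise by $(\eta g_j)^{p_j\alpha}$ (whose weighted moment is finite). After Young's trilinear inequality with weights and an AM--GM step with any fixed $\alpha\in(0,1/2)$, you obtain a polynomial decay rate $O(||\bf||_\bp^{1-2\alpha})$, cleaner than the paper's logarithmic-loss rate and with no case distinction or explicit radius truncation. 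Both arguments lean crucially on the same structural fact — the $f_{j,\sharp}$ inherit the Gaussian majorant, which is why the lemma is stated for $f_{j,\sharp}$ rather than $f_j$ — but your interpolation exploits it more directly. Your proposal is a valid alternative proof.
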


Lemma \ref{lemma:3rd order} also applies to the other two terms of this type.

Note that the trivial bound
\begin{equation}\label{eq:trivial bound}
|(\T_A-\T_0)(h_1,h_2,g_3)|=O\left(||h_1||_{p_1}||h_2||_{p_2}\right)
\end{equation}
is insufficient to deal with the above term directly since it provides a second order control of a term which should heuristically be third order. However, \eqref{eq:trivial bound} still plays a useful role in the proof of Lemma \ref{lemma:3rd order}.

\begin{proof}
First, suppose that $||A^TJA||^3\geq||f_{1,\sharp}||_{p_1}||f_{2,\sharp}||_{p_2}$. Note that by our reduction to small perturbations in Theorem \ref{thm:perturbative2}, $||A^TJA||$ may be taken small enough that $||A^TJA||^3\leq||A^TJA||^2$. By \eqref{eq:trivial bound},
\begin{equation}
(\T_A-\T_0)(f_{1,\sharp},f_{2,\sharp},g_3)\leq C||f_{1,\sharp}||_{p_1}||f_{2,\sharp}||_{p_2}\leq||A^TJA||^3=o(||\bf||_\bp^2+||A^TJA||^2)
\end{equation}
and we are done.

So suppose that $||A^TJA||^3<||f_{1,\sharp}||_{p_1}||f_{2,\sharp}||_{p_2}$. Now, for $j=1,2$, write $f_{j,\sharp}=f_{j,\sharp,\leq M_j}+f_{j,\sharp,>M_j}$, where $f_{j,\sharp,\leq M_j}=f_{j,\sharp}\one_{B(0,M_j)}$ and $f_{j,\sharp,>M_j}=f_{j,\sharp}\one_{B(0,M_j)^c}$. In the above, $\one_E$ refers to the indicator function of the set $E$, $B(x_0,R)$ refers to the closed ball of radius $R$ centered at $x_0$, $E^c$ is the complement of the set $E$, and $M_j$ is chosen so that
\begin{equation}\label{eq:smallnorm}
||f_{j,\sharp,>M_j}||_{p_j}=||f_{j,\sharp}||_{p_j}^2.
\end{equation}

Note that $M_j$ is dependent on $\eta$.

We claim that $M_j\leq C\log(||f_{j,\sharp}||_{p_j}^{-1})$. To see this, observe that for given $\eta$ and $||f_{j,\sharp}||_{p_j}$ and varying $f_{j,\sharp}$, $M_j$ is maximized when $f_{j,\sharp}=\eta g_j$ on $B(0,M)^c$ and $f_{j,\sharp}=0$ on $B(0,M)$, where $M$ is the positive real number that leads to the appropriate value of $||f_{j,\sharp}||_{p_j}$. (Here, $M<M_j$ since $||f_{j,\sharp}||_{p_j}$ is small.) It suffices to find an upper bound for $M_j$ in this scenario. We integrate with respect to spherical coordinates to obtain
\begin{align*}
||f_{j,\sharp}||_{p_j}^2&=||f_{j,\sharp,>M_j}||_{p_j}\\
&=\int_{S^{d-1}}\left[\int_{M_j}^\infty \eta e^{-\pi p_j'r^2}r^{2d-1}dr\right]d\sigma(\theta)\\
&=C_d\eta\int_{M_j}^\infty \eta e^{-\pi p_j'r^2}r^{2d-1}dr\\
&=O(M_j^{2d-2}e^{-\pi p_j'M_j^2}).
\end{align*}
Thus, $||f_{j,\sharp}||_{p_j}\leq Ce^{-M_j}$, proving our claim.


Expand 
\begin{multline*}
(\T_A-\T_0)(f_{1,\sharp},f_{2,\sharp},g_3)=(\T_A-\T_0)(f_{1,\sharp,>M_1},f_{2,\sharp,>M_2},g_3)+(\T_A-\T_0)(f_{1,\sharp,>M_1},f_{2,\sharp,\leq M_2},g_3)\\
+(\T_A-\T_0)(f_{1,\sharp,\leq M_1},f_{2,\sharp,>M_2},g_3)+(\T_A-\T_0)(f_{1,\sharp,\leq M_1},f_{2,\sharp,\leq M_2},g_3)
\end{multline*}
The first three of these terms may be treated by combining the trivial bound \eqref{eq:trivial bound} with \eqref{eq:smallnorm}.

Let $R=B(0,M_1)\times B(0,M_2)\subset \R^{2d}\times\R^{2d}$. The absolute value of the remaining term is
\begin{align*}
|(\T_A-\T_0)(f_{1,\sharp},f_{2,\sharp},g_3)|&\leq\iint_R|f_{1,\sharp}(x)|\cdot|f_{2,\sharp}(y)|\cdot g_3(x+y)\cdot|\sigma(Ax,Ay)|dxdy\\
&\leq C||f_{1,\sharp}||_{p_1}||f_{2,\sharp}||_{p_2}||g_3||_{p_3}||A^TJA||M_1M_2\\
&\leq C||f_{1,\sharp}||_{p_1}^{4/3}||f_{2,\sharp}||_{p_2}^{4/3}\log(||f_{1,\sharp}||_{p_1}^{-1})\log(||f_{2,\sharp}||_{p_2}^{-1})=o(||\bf||_\bp^2)
\end{align*}

\end{proof}

\begin{lemma}\label{lemma:onesigma}
For all $f\in L^{p_1}(\R^{2d})$

\begin{equation}
\iint f(x)g_2(y)g_3(x+y)\sigma(Ax,Ay)dxdy=0
\end{equation}
\end{lemma}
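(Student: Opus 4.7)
The plan is to show that the $y$-integral vanishes for every fixed $x$, so the result will follow by Fubini. Write $\sigma(Ax,Ay) = x^T B y$ where $B := A^T J A$. Since $J^T = -J$, we have $B^T = -B$, so $B$ is antisymmetric; in particular $x^T B x = 0$ for every $x \in \R^{2d}$.

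The Gaussian product $g_2(y) g_3(x+y) = e^{-\pi(p_2'|y|^2 + p_3'|x+y|^2)}$ can be rewritten by completing the square in $y$. Setting $\alpha = p_2' + p_3'$ and $\beta = p_3'/\alpha$, a direct calculation gives
\begin{equation*}
g_2(y)g_3(x+y) = e^{-\pi \alpha |y + \beta x|^2}\, e^{-\pi (p_2' p_3'/\alpha)|x|^2}.
\end{equation*}
After the change of variable $\tilde{y} = y + \beta x$, the quantity $x^T B y$ becomes $x^T B \tilde{y} - \beta\, x^T B x = x^T B \tilde{y}$, where the second term vanishes by the antisymmetry of $B$. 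Hence
\begin{equation*}
\int g_2(y) g_3(x+y)\, \sigma(Ax,Ay)\, dy = e^{-\pi (p_2' p_3'/\alpha)|x|^2} \int e^{-\pi\alpha|\tilde{y}|^2}\, x^T B \tilde{y}\, d\tilde{y} = 0,
\end{equation*}
since $\tilde{y} \mapsto e^{-\pi \alpha |\tilde{y}|^2}$ is even while $\tilde{y} \mapsto x^T B \tilde{y}$ is linear (hence odd) in $\tilde{y}$.

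With the inner integral identically zero, Fubini's theorem gives the desired identity, provided the iterated integral is absolutely convergent. This is easy: $|f(x)g_2(y)g_3(x+y)\sigma(Ax,Ay)| \leq \|A^T J A\| \cdot |f(x)|\,|x|\, g_2(y)\,|y|\,g_3(x+y)$, and the resulting product of Gaussians times polynomial weights is integrable against any $f \in L^{p_1}$ by Hölder's inequality, since $|x|^{2} g_2(x) g_3(x)$ and its convolution analogues have rapid decay. No step poses a real obstacle; the only place one must be slightly careful is justifying Fubini, but the Gaussian decay trivially handles this.
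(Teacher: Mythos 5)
Your proof is correct, and it takes a genuinely different and more streamlined route than the paper's. The paper first diagonalizes the antisymmetric matrix $A^TJA$ as $Q^T\Sigma Q$ with $Q$ orthogonal and $\Sigma$ in $2\times2$ block form, uses the rotational invariance of the Gaussians to absorb $Q$, reduces to a single $2\times2$ block (i.e.\ to integrals of the form $\int g_2(y)g_3(x+y)(x_1y_2-x_2y_1)\,dy$), and then checks that the two pieces cancel via explicit identities $g_2*g_3=Cg_1$ and $\int y\,g_2(y)g_3(x+y)\,dy=C'xg_1(x)$. You avoid the spectral decomposition entirely: you complete the square in $y$, observe that the center of the Gaussian $g_2(y)g_3(x+y)$ shifts by a multiple $\beta x$ of $x$, and note that this shift contributes nothing because $x^TBx=0$ for antisymmetric $B$; the resulting integral then vanishes by the odd symmetry of a linear form against a centered Gaussian. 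This is shorter and cleaner for the present lemma. The paper's machinery is not wasted, though: the same diagonalization is reused in the proof of Lemma~\ref{lemma:twosigma}, where the quadratic expression $\sigma(Ax,Ay)^2$ no longer yields to a bare parity argument, and explicitly identifying the eigenvalues $\pm a_ki$ of $A^TJA$ is what lets one compare the integral to $\|A^TJA\|^2$. Your Fubini justification is also fine; the Gaussian-times-polynomial decay makes the absolute integral converge for any $f\in L^{p_1}$.
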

The conclusion also applies to the same integral with $(g_1,f,g_3)$ or $(g_1,g_2,f)$ in place of $(f,g_2,g_3)$ (with $f\in L^{p_j}$ for the appropriate $j\in\{1,2,3\}$).
\begin{proof}
Since $\sigma(Ax,Ay)=x^TA^TJAy$ is an antisymmetric bilinear form, we may diagonalize $A^TJA$ as $Q^T\Sigma Q$ for some orthogonal $Q$ and
\begin{equation}
\Sigma=\begin{pmatrix}
0&a_1&...&0&0\\
-a_1&0&...&0&0\\
\vdots&\vdots&\ddots&\vdots&\vdots\\
0&0&...&0&a_d\\
0&0&...&-a_d&0
\end{pmatrix},
\end{equation}
where $a_k\in\R$ and $\pm a_ki$ are the eigenvalues of $A^TJA$. Since $g_j(x)=e^{-\pi p_j'|x|^2}$, $g_2$ and $g_3$ remain unchanged under an orthogonal change of coordinates. Thus, the above is equal to

 and we may write the above as
\begin{equation}
\iint f(Qx)g_2(y)g_3(x+y)\sum_{k=1}^d a_k(x_{2k-1}y_{2k}-x_{2k}y_{2k-1})dxdy.
\end{equation}
Since $f(x)$ is an arbitrary function of $x$, $f(Qx)$ is also an arbitrary function of $x$, so it suffices to show that
\begin{equation}
\int g_2(y)g_3(x+y)\sum_{k=1}^d a_k(x_{2k-1}y_{2k}-x_{2k}y_{2k-1})dy=0
\end{equation}
for all $x\in\R^{2d}$.

By linearity and permutation of coordinates, it suffices to show that
\begin{equation}
\int g_2(y)g_3(x+y)(x_1y_2-x_2y_1)dy=0.
\end{equation}
Writing $e^{-\pi p_j'|w|^2}=e^{-\pi p_j'(w_1^2+w_2^2)}e^{-\pi p_j'(w_3^2+...+w_{2d}^2)}$, the above integral factors into
\begin{equation}
\int g_2(y_1,y_2)g_3(x_1+y_1,x_2+y_2)(x_1y_2-x_2y_1)dy_1dy_2\cdot\int g_2(\tilde{y})g_3(\tilde{x}+\tilde{y})d\tilde{y},
\end{equation}
where $x=(x_1,x_2,\tilde{x}),y=(y_1,y_2,\tilde{y})$, and through abuse of notation, $g_j(w)=e^{-p_j'|w|^2}$ for $w$ in any dimension. It now suffices to show the first factor is zero.

Expanding this factor gives
\begin{multline}
x_1\int y_2g_2(y_2)g_3(x_2+y_2)dy_2\cdot\int g_2(y_1)g_3(x_1+y_1)dy_1\\-x_2\int y_1g_2(y_1)g_3(x_1+y_1)dy_1\cdot\int g_2(y_2)g_3(x_2+y_2)dy_2.
\end{multline}

An elementary computation shows that $g_2*g_3=Cg_1$ and $\int yg_2(y)g_3(x+y)dy=C'xg_1(x)$, hence the above becomes
\begin{equation}
x_1\cdot C'x_2g_1(x_2)\cdot Cg_1(x_1)-x_2\cdot C'x_1g_1(x_1)\cdot Cg_1(x_2)=0.
\end{equation}
\end{proof}

If $S$ is a list of parameters, let $A\approx_S B$ mean there exists a $C>0$ depending only on elements of $S$ such that $A\leq CB$ and $B\leq CA$.

\begin{lemma}\label{lemma:twosigma}
For $\bg$ and $A$ as above,
\begin{equation}
\iint g_1(x)g_2(y)g_3(x+y)\sigma^2(Ax,Ay)dxdy\approx_{d,\bp}||A^TJA||^2.
\end{equation}
\end{lemma}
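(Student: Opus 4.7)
The plan is to mirror the diagonalization argument used in Lemma~\ref{lemma:onesigma} and then expand the square, showing that after an orthogonal change of coordinates the cross terms vanish for reasons very similar to Lemma~\ref{lemma:onesigma}, while the diagonal terms are each manifestly positive Gaussian integrals. First I would write $A^TJA=Q^T\Sigma Q$ with $Q$ orthogonal and $\Sigma$ in the antisymmetric block form appearing in Lemma~\ref{lemma:onesigma}, with nonzero entries $\pm a_k$. Substituting $x\mapsto Qx$, $y\mapsto Qy$ leaves each $g_j$ invariant (they are radial Gaussians) and turns the integrand into
\begin{equation*}
g_1(x)g_2(y)g_3(x+y)\Bigl(\sum_{k=1}^d a_k(x_{2k-1}y_{2k}-x_{2k}y_{2k-1})\Bigr)^{\!2}.
\end{equation*}

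Next I would expand the square and split the resulting $d^2$ terms according to whether the pair of indices $(k,\ell)$ satisfies $k=\ell$ or $k\neq\ell$. The key observation is that $g_j(w)=\prod_{k=1}^d e^{-\pi p_j'(w_{2k-1}^2+w_{2k}^2)}$ factors over coordinate pairs, and so does $g_3(x+y)$, so the entire integral decouples into a product of integrals over the $k$-th coordinate block in $(x,y)$. For a cross term with $k\neq \ell$, one of the blocks carries a single factor of the form $x_{2k-1}y_{2k}-x_{2k}y_{2k-1}$, and the computation in Lemma~\ref{lemma:onesigma} (specialized to $d=1$, with $f\equiv g_1$) shows that this block integral vanishes. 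Hence only the diagonal $k=\ell$ terms survive.

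For each diagonal term, the same block decomposition reduces us to
\begin{equation*}
I:=\iint g_1(x)g_2(y)g_3(x+y)(x_1y_2-x_2y_1)^2\,dx\,dy,
\end{equation*}
times $a_k^2$ times the value of the analogous integral in the remaining $2d-2$ dimensions without the polynomial factor. Both of these are explicit, finite, and strictly positive Gaussian integrals depending only on $d$ and $\bp$, yielding a constant $C_{d,\bp}>0$ with the expression equal to $C_{d,\bp}\sum_{k=1}^d a_k^2$. Finally, since $A^TJA$ is similar (via the orthogonal $Q$) to $\Sigma$, its operator norm is exactly $\max_k|a_k|$, so
\begin{equation*}
\|A^TJA\|^2\le\sum_{k=1}^d a_k^2\le d\,\|A^TJA\|^2,
\end{equation*}
which gives the desired equivalence $\approx_{d,\bp}$.

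The routine part is the explicit evaluation of $I$ and its companion Gaussian integral; I do not plan to carry these out since only positivity and finiteness are needed. The main conceptual step is the vanishing of the off-diagonal $(k\neq\ell)$ cross terms, which is the step I would flag as requiring the most care: one must verify that the block factorization of $g_3(x+y)$ really does decouple the coordinate pairs, so that Lemma~\ref{lemma:onesigma}'s single-variable zero integral can be invoked block by block.
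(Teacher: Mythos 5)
Your proposal is correct and follows essentially the same route as the paper: orthogonal diagonalization of $A^TJA$ into $2\times 2$ antisymmetric blocks, expansion of the square, block-wise factorization of the Gaussians to kill the off-diagonal cross terms via the vanishing integral from Lemma~\ref{lemma:onesigma}, and identification of $\|A^TJA\|$ with $\max_k|a_k|$. The paper states the cross-term cancellation more tersely (``computing two coordinates at a time as in the proof of Lemma~\ref{lemma:onesigma}''), but your explicit unpacking of that step is exactly what the reference to Lemma~\ref{lemma:onesigma} is meant to encode.
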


\begin{proof}
As in the proof of Lemma \ref{lemma:onesigma}, one may use an orthogonal change of coordinates to reduce to the computation of
\begin{equation}
\iint g_1(x)g_2(y)g_3(x+y)\left[\sum_{k=1}^da_k(x_{2k-1}y_{2k}-x_{2k}y_{2k-1})\right]^2dxdy.
\end{equation}
Expanding the square gives
\begin{equation}
\sum_{j,k=1}^da_ja_k\iint g_1(x)g_2(y)g_3(x+y)(x_{2j-1}y_{2j}-x_{2j}y_{2j-1})(x_{2k-1}y_{2k}-x_{2k}y_{2k-1})dxdy.
\end{equation}
By factoring the $g_j$ and computing the above integrals two coordinates at a time as in the proof of Lemma \ref{lemma:onesigma}, one finds that the cross terms are zero. Thus, the original integral is equal to a function to $d$ and $\bp$ alone times $\sum_{k=1}^da_k^2$. Recall that $\pm a_ki$ are the eigenvalues of $A^TJA$, so $||A^TJA||^2=\max_k|a_k|^2$ and the two expressions are equivalent.
\end{proof}

At this point, it is tempting to expand $\T_A(\bg+\bf)$, using the previous four lemmas to treat the $(\T_A-\T_0)$ terms (to get $-c||A^TJA||^2$) and Theorem \ref{thm:ChristSY} to treat the $\T_0$ terms (and get $A_\bp^{2d}-c||\bf||_\bp^2$). However, Theorem \ref{thm:ChristSY} may only be applied directly when the perturbative terms $f_j$ represent the projective distance from the orbit of the original functions to $\bg$. The subtle difference here is that the $f_j$ which represent the minimum value of $||\bf||_\bp^2$ may not be the same functions which represent the minimum value of $||\bf||^2+||A^TJA||^2$.

For this reason, we will delve somewhat into the proof of Theorem \ref{thm:ChristSY} and show that it is possible to obtain the same circumstances which lead to a $-c||\bf||^2$ decay.

\section{Balancing Lemma}

For $t>0$ and $n=0,1,2,...,$ let $P_n^{(t)}$ denote the real-valued polynomial of degree $n$ with positive leading coefficient and $||P_n^{(t)}e^{-t\pi x^2}||_{L^2(\R)}=1$ which is orthogonal to $P_k^{(t)}e^{-t\pi x^2}$ for all $0\leq k<n$.

For $d>1$, $\alpha=(\alpha_1,...,\alpha_d)\in\{0,1,2,...\}^d$, and $x=(x_1,...,x_d)\in\R^d$, define
\begin{equation}
P_\alpha^{(t)}(x)=\prod_{k=1}^dP_{\alpha_k}^{(t)}(x_k).
\end{equation}

Let $\tau_j=\frac{1}{2}p_jp_j'$ ($j=1,2,3$). In \cite{ChristSY}, the following is proved en route to the main theorem.

\begin{proposition}\label{prop:only}
Let $\delta_0>0$ be sufficiently small. There exists $c,\tilde{c}>0$ and a choice of $\eta>0$ in the $f_j=f_{j,\sharp}+f_{j,\flat}$ decomposition such that the following holds. Suppose $||\bf||_\bp<\delta_0$ and $f_j$ satisfy the following orthogonality conditions:
\begin{itemize}
\item $\langle Re(f_j),P_\alpha^{(\tau_j)}g_j^{p_j-1}\rangle=0$ whenever $\alpha=0$, $|\alpha|=1$ and $j\in\{1,2\}$, or $|\alpha|=2$ and $j=3$.
\item $\langle Im(f_j),P_\alpha^{(\tau_j)}g_j^{p_j-1}\rangle=0$ whenever $\alpha=0$ or $|\alpha|=1$ and $j=3$.
\end{itemize}
Then,
\begin{equation}\label{eq:Christ conclusion}
\frac{\T_0(\bg+\bf)}{\prod_j||g_j+f_j||_{p_j}}\leq A_\bp^{2d}-c||\bf||_\bp^2-\tilde{c}\sum_j||f_{j,\flat}||_{p_j}^{p_j}.
\end{equation}
\end{proposition}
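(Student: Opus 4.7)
The plan is to Taylor-expand both the numerator $\T_0(\bg+\bf)$ and the denominator $\prod_j\|g_j+f_j\|_{p_j}$ around the Gaussian maximizer $\bg$, treating $\bf_\sharp$ and $\bf_\flat$ separately. On the pointwise-small part $\bf_\sharp$, we have $|f_{j,\sharp}|\leq \eta g_j$, so for $\eta$ small we may expand $|g_j+f_{j,\sharp}|^{p_j}$ in a convergent pointwise Taylor series. Taking $p_j$-th roots and forming the product gives a quadratic expansion in $\bf_\sharp$ with a cubic remainder of size $O(\eta\|\bf_\sharp\|_\bp^2)$. Multilinearity of $\T_0$ yields $\T_0(\bg+\bf_\sharp)=\T_0(\bg)+L(\bf_\sharp)+B(\bf_\sharp)+\T_0(\bf_\sharp)$, and after dividing, the zeroth-order contribution to the quotient is exactly $\bA_\bp^{2d}$.

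I would then examine the linear and quadratic contributions in order. The linear terms in the quotient expansion are all of the form $\operatorname{Re}\langle f_{j,\sharp},g_j^{p_j-1}\rangle$ or $\operatorname{Im}\langle f_{j,\sharp},g_j^{p_j-1}\rangle$ (up to positive constants), and hence are killed by the $\alpha=0$ orthogonality hypotheses for both real and imaginary parts. The quadratic part defines a form $Q(\bf_\sharp)$ which, thanks to the Gaussian weight structure, decomposes orthogonally when each $f_{j,\sharp}$ is expanded in the Hermite basis $\{P_\alpha^{(\tau_j)}g_j^{p_j-1}\}$; the exponents $\tau_j=\tfrac12 p_jp_j'$ are precisely those for which this diagonalization matches the natural Ornstein--Uhlenbeck operator arising from the second variation. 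The zero eigenmodes of $Q$ correspond to infinitesimal generators of the symmetry group of $\T_0$: scaling ($\alpha=0$, already removed), translation and modulation for $j=1,2$ (the $|\alpha|=1$ real and imaginary conditions), and the isotropic dilation coming from the diagonal $GL$ action (the $|\alpha|=2$ real condition for $j=3$ and the $|\alpha|=1$ imaginary condition for $j=3$). After removing these modes, $Q$ is strictly negative with a spectral gap uniform over the compact set $K$, giving $Q(\bf_\sharp)\leq -c\|\bf_\sharp\|_\bp^2$.

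To finish, I would handle $\bf_\flat$ and the cross terms. Trilinear bounds of the form \eqref{eq:trivial bound} give $O(\|\bf_\flat\|_\bp\,\|\bf\|_\bp)$ contributions from terms involving at least one $f_{j,\flat}$, while on the set $\{|f_{j,\flat}|>\eta g_j\}$ the pointwise convexity estimate
\[
|g_j+f_{j,\flat}|^{p_j}\geq g_j^{p_j}+p_j g_j^{p_j-1}\operatorname{Re}(f_{j,\flat})+c_\eta\,|f_{j,\flat}|^{p_j}
\]
produces the gain $-\tilde c\sum_j\|f_{j,\flat}\|_{p_j}^{p_j}$ in the denominator. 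Combining the quadratic upper bound on $\bf_\sharp$, the $\ell^{p_j}$ gain on $\bf_\flat$, and the cubic remainder absorbed by taking $\eta$ sufficiently small then yields \eqref{eq:Christ conclusion}.

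The main obstacle is establishing strict negative-definiteness of $Q$ with a uniform spectral gap after quotienting by the symmetry modes. This requires a careful computation of the second variation and a matching of its eigenspaces with specific Hermite-polynomial modes $P_\alpha^{(\tau_j)}$, together with an argument (uniform in $\bp\in K$) that no further zero eigenvalues arise — the explicit Gaussian form of the extremizers and the specific choice $\tau_j=\tfrac12 p_jp_j'$ enter essentially here. Uniformity over $K$ then follows from continuity of the spectrum in $\bp$ and compactness.
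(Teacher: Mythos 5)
The paper does not actually prove Proposition~\ref{prop:only}; it cites it as an intermediate result extractable from the proof of Theorem~\ref{thm:ChristSY} in \cite{ChristSY}, remarking that \eqref{eq:Christ conclusion} is in effect the penultimate line of that argument (with the weighted $L^2$ quantity $\sum_j\|f_{j,\sharp}g_j^{(p_j-2)/2}\|_2^2$ replaced by the comparable $c\|\bf\|_\bp^2$). Your proposal instead tries to rebuild Christ's proof from scratch, which is a genuinely different and considerably more ambitious route than what the paper does.

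Your sketch does name the right structural ingredients: a second-order expansion of $\T_0(\bg+\bf)/\prod_j\|g_j+f_j\|_{p_j}$, diagonalization of the resulting quadratic form in a Hermite basis attached to the parameters $\tau_j=\tfrac12 p_jp_j'$, identification of the null modes with infinitesimal symmetries, a pointwise convexity estimate producing the $\sum_j\|f_{j,\flat}\|_{p_j}^{p_j}$ gain, and absorption of the cubic remainder by shrinking $\eta$. However, the crux of the proposition --- that the quadratic form is \emph{strictly} negative on the orthogonal complement of the symmetry modes, with a spectral gap uniform over the compact set $K$ --- you only flag as ``the main obstacle'' and do not argue. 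This is not a peripheral verification: it is the entire content of the result, and it is where most of the work in \cite{ChristSY} lives (an explicit diagonalization of the second variation against Hermite modes and a check that no degenerate directions survive for the excluded $\alpha$'s, uniformly in $\bp$). Leaving it acknowledged but open makes your proposal a roadmap rather than a proof.

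One further correction to your bookkeeping of which orthogonality conditions kill which symmetry modes: the hypotheses impose the imaginary $|\alpha|=1$ condition only for $j=3$ (it encodes modulation), not for $j=1,2$ as you state; and the real $|\alpha|=2$ condition for $j=3$ encodes the full symmetric part of the diagonal $GL(2d)$ action, not merely isotropic dilation. Getting this matching exact --- so that the number of orthogonality constraints equals the dimension of the symmetry group modulo the stabilizer of $\bg$ --- is essential to the spectral-gap step your argument rests on.
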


The above proposition is not stated as an explicit result of \cite{ChristSY}. However, \eqref{eq:Christ conclusion} is, in effect, the penultimate line of the proof of Theorem \ref{thm:ChristSY} in Section 8 of \cite{ChristSY}. (The one difference is that $c||\bf||_\bp^2$ is replaced by $\sum_j||f_{j,\sharp}g_j^{(p_j-2)/2}||_2^2$ in the line in \cite{ChristSY}, though it is shown the latter majorizes a constant multiple of the former.)

We cite this particular intermediate result in order to take advantage of the $f_j=f_{j,\sharp}+f_{j,\flat}$ decomposition. The terms in Lemma \ref{lemma:3rd order} involve $f_{j,\sharp}$ in place of $f_j$ so \eqref{eq:Christ conclusion} is used to deal with the case that $f_{j,\flat}$ makes up a significant portion of the $L^{p_j}$ norm of $f_j$.


The goal of this section is to reduce to the situation in which the hypotheses of Proposition \ref{prop:only} apply. This is done through the use of the following balancing lemma.

\begin{lemma}[Balancing Lemma]
Let $d\geq1$ and $\bp\in(1,2]^3$ with $\sum_jp_j^{-1}=2$. There exists $\delta_0>0$ such that if $||F_j-g_j||_{p_j}\leq\delta_0$, $||A^TJA||\leq\delta_0$, and $\langle F_j-g_j,g_j^{p_j-1}\rangle=0$, then there exist $v_j\in \R^{2d}$ satisfying $v_1+v_2+v_3=0$, $a_j\in\C$, $\xi\in\R^{2d}$, and a $(2d)\times(2d)$ matrix $\psi$ such that
\begin{equation}
\sum_j(|v_j|+|a_j-1|)+||\psi-I_{2d}||+|\xi|\leq C\left[\left(\sum_j||f_j-g_j||_{p_j}\right)^2+||A^TJA||^2\right]
\end{equation}
and the orthogonality conditions of Proposition \ref{prop:only} hold for the functions
\begin{equation}
\tilde{F_j}(x)=a_jF_j(\psi(x)+v_j)e^{ix\cdot\xi+iA^TJA\tilde{v_j}\cdot x},
\end{equation}
where $\tilde{v_1}=v_2, \tilde{v_2}=v_1$, and $\tilde{v_3}=0$.
\end{lemma}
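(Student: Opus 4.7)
The plan is to apply the implicit function theorem to the map $\Phi$ sending parameters $\theta=(v_1,v_2,a_1,a_2,a_3,\xi,\psi)$ (with $v_3:=-v_1-v_2$) and data $(F,A)$ to the vector of orthogonality defects
\[
\langle\mathrm{Re}(\tilde F_j-g_j),P_\alpha^{(\tau_j)}g_j^{p_j-1}\rangle,\quad\langle\mathrm{Im}(\tilde F_j-g_j),P_\alpha^{(\tau_j)}g_j^{p_j-1}\rangle
\]
indexed by the multi-indices $\alpha$ appearing in Proposition~\ref{prop:only}, where $\tilde F_j$ is the function defined in the statement. At the base point $\theta_0=(0,0,1,1,1,0,I_{2d})$ and $(F,A)=(\bg,0)$ one has $\tilde F_j=g_j$, so $\Phi(\theta_0;\bg,0)=0$.

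The central computation is to verify that $D_\theta\Phi$ is invertible at the base point after restricting $\psi-I_{2d}$ to symmetric matrices. This gives $6+4d+2d+(2d^2+d)=2d^2+7d+6$ real parameters, exactly matching the total number of orthogonality conditions in Proposition~\ref{prop:only}. Differentiating $\tilde F_j$ and using $\nabla g_j=-2\pi p_j'\,x\,g_j$, each parameter variation yields a low-degree polynomial in $x$ times $g_j$. Pairing against $P_\alpha^{(\tau_j)}g_j^{p_j-1}$ and exploiting the orthogonality of $\{P_\alpha^{(\tau_j)}\}$ in the weight $g_j^{p_j}$, the Jacobian is upper block-triangular by polynomial degree: $\delta a_j$ couples only to degree-$0$ defects, $\delta v_j$ and $\delta\xi$ couple to degree-$1$ defects, and $\delta\psi$ couples to degree-$2$ defects (plus strictly lower-order contributions to degree-$0$ that are absorbed by the freedom in $a_j$). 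Each diagonal block has non-vanishing determinant, bounded below uniformly for $\bp$ in a compact subset of $(1,2]^3$.

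The implicit function theorem then produces a unique small $\theta$ solving $\Phi(\theta;F,A)=0$ for $(F,A)$ sufficiently close to $(\bg,0)$, with $|\theta-\theta_0|\le C|\Phi(\theta_0;F,A)|$. Since $\tilde F_j|_{\theta_0}=F_j$, the defect vector at the base point is a collection of pairings of $F_j-g_j$ with Schwartz test functions, plus $A$-corrections from the twist factor. To upgrade the resulting linear estimate into the stated quadratic bound, I would first replace $(F,A)$ with a preliminary orbit representative minimizing $\max_j\|F^{(0)}_j-g_j\|_{p_j}^2+\|A^TJA\|^2$: at such a minimizer the first-order variations along the symmetry group vanish, which is equivalent to saying that the degree-$\le 2$ defects are already quadratically small, so the subsequent IFT correction inherits the same order. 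Twist-factor cross contributions of size $O(\|A^TJA\|\cdot|v_j|)$ are absorbed into $\|A^TJA\|^2+|v_j|^2$ via AM-GM.

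The main obstacle will be implementing the preliminary orbit-minimization cleanly, since the first-order variational conditions at that minimizer involve the dual-norm derivatives $|F-g|^{p_j-2}(F-g)$ rather than the test functions $P_\alpha^{(\tau_j)}g_j^{p_j-1}$ appearing in the orthogonality conditions; reconciling these requires an expansion around the Gaussian where the two coincide to leading order, so that one-variable modifications cancel first-order contributions in both systems simultaneously. The twist-induced coupling $v_j\mapsto A^TJA\tilde v_j$ in $\tilde F_j$ adds off-diagonal Jacobian entries of size $\|A^TJA\|\le\delta_0$, which is a relatively compact perturbation of the $A=0$ Jacobian and can be handled by a Neumann-series argument for $\delta_0$ small.
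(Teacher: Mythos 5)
Your core argument---set up the defect map $\Phi$, verify the Jacobian $D_\theta\Phi$ is invertible by organizing it by polynomial degree, invoke the implicit function theorem---is exactly the paper's proof. Your parameter count $2d^2+7d+6$ matches the count of orthogonality conditions, as in the paper, and your observation that the Jacobian is block-\emph{triangular} rather than block-\emph{diagonal} by degree is in fact slightly more careful than what the paper writes: the paper asserts that $\langle x\cdot\phi(x)g_j,P_0^{(\tau_j)}g_j^{p_j-1}\rangle$ vanishes, but taking $\phi=I_{2d}$ gives a nonzero weighted second moment proportional to $\operatorname{tr}(\phi)$. The triangular structure (with the trace component absorbed into $\mathbf{b}$) is the honest reason invertibility holds, so your phrasing is the right one.

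Where you diverge from the paper is the long detour aimed at upgrading the IFT estimate from linear to the quadratic bound written in the lemma statement. You are right that the IFT, applied directly, only yields $|\theta|\le C\bigl(\sum_j\|F_j-g_j\|_{p_j}+\|A^TJA\|\bigr)$: the degree-$0$ defects vanish by hypothesis, but the degree-$1$ and degree-$2$ defects $\langle F_j-g_j,P_\alpha^{(\tau_j)}g_j^{p_j-1}\rangle$ are genuinely linear in $F_j-g_j$. The paper's own proof does not obtain anything stronger either; it stops at ``the map is invertible.'' The quadratic power on $\sum_j\|F_j-g_j\|_{p_j}$ in the displayed estimate appears to be an error in the lemma statement (note it even writes $f_j$ for $F_j$). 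Crucially, that quadratic bound is never used downstream: in the proof of Theorem~\ref{theorem:main}, the Balancing Lemma is invoked only to (i) produce an orbit representative $(\bg+\bf,A)$ satisfying the orthogonality conditions of Proposition~\ref{prop:only}, and (ii) guarantee $\|\bf\|_\bp$ and $\|A^TJA\|$ are small enough for Proposition~\ref{prop:only} to apply; the inequality $\dist_\bp(\scriptO_{TC}(\bh,B),(\bg,0))^2\le\max_j\|f_j\|_{p_j}^2+\|A^TJA\|^2$ then holds trivially because the distance is an infimum over the orbit. So the preliminary orbit-minimization step you propose---which, as you yourself flag, would require reconciling the dual-norm Euler--Lagrange conditions with the Hermite-test-function orthogonality---is unnecessary and should simply be dropped. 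Replacing the claimed quadratic bound with the linear one the IFT actually gives leaves the rest of the paper intact.
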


\begin{proof}
Begin by writing $h_j=g_j-F_j$ and $\tilde{h_j}=g_j-\tilde{F_j}$, where $\tilde{F_j}=a_jF_j(\psi(x)+v_j)e^{ix\cdot\xi+iA^TJA\tilde{v_j}\cdot x}$ and $\psi, v_j, a_j,\xi$ are to be determined. Letting $a_j=1+b_j$ and subbing in $f_j=g_j+h_j$,
\begin{align*}
\tilde{h_j}(x)&=a_jF_j(\psi(x)+v_j)e^{ix\cdot\xi+iA^TJA\tilde{v_j}\cdot x}-g_j(x)\\
&=(1+b_j)(g_j(\psi(x)+v_j)e^{ix\cdot\xi+iA^TJA\tilde{v_j}\cdot x}+h_j(\psi(x)+v_j)e^{ix\cdot\xi+iA^TJA\tilde{v_j}\cdot x})-g_j(x).
\end{align*}
Writing $\psi(x)=x+\phi(x)$ and taking the two terms involving $g_j$ from above,
\begin{align*}
g_j(\psi(x)&+v_j)e^{ix\cdot\xi+iA^TJA\tilde{v_j}\cdot x}-g_j(x)\\
&=g_j(x)[g_j^{-1}(x)g_j(x+v_j+\phi(x))e^{ix\cdot\xi+iA^TJA\tilde{v_j}\cdot x}-1]\\
&=g_j(x)(e^{-\pi p_j'[|x+v_j+\phi(x)|^2-|x|^2]}e^{ix\cdot\xi+iA^TJA\tilde{v_j}\cdot x}-1)\\
&=g_j(x)x\cdot[-2p_j'(\phi(x)+v_j)+i\xi+iA^TJA\tilde{v_j}]+O((||\phi||+|\bv|+|\xi|)^2),
\end{align*}
where $O((|\phi||+|\bv|+|\xi|)^2)$ represents the $L^{p_j}$ norm of the remainder term. Substituting back into the initial expression for $\tilde{h_j}$, one finds
\begin{multline}\label{eq:hj expression}
\tilde{h_j}(x)=a_jh_j(\psi(x)+v_j)e^{ix\cdot\xi+iA^TJA\tilde{v_j}\cdot x}\\+g_j(x)x\cdot[-2p_j'(\phi(x)+v_j)+i\xi+iA^TJA\tilde{v_j}]+O((||\phi||+|\bv|+|\mathbf{b}|+|\xi|)^2).
\end{multline}
In computing $\langle \tilde{h_j},P_\alpha^{(\tau_j)}g_j^{p_j-1}\rangle$, we begin with the main term from \eqref{eq:hj expression}.
\begin{align*}
\langle a_jh_j(\psi(x)&+v_j)e^{ix\cdot\xi+iA^TJA\tilde{v_j}\cdot x},P_\alpha^{(\tau_j)}g_j^{p_j-1}\rangle\\
&=\langle h_j(\psi(x)+v_j),P_\alpha^{(\tau_j)}g_j^{p_j-1}\rangle +O((|\mathbf{b}|+|\xi|+|\bv|)||h_j||_{p_j})\\
&=|\det(\psi)|^{-1}\int h_j(y)P_\alpha^{(\tau_j)}(\psi^{-1}(y-v_j)g_j^{p_j-1}(\psi^{-1}(y-v_j)dy\\
&\hspace{.5in}+O((|\mathbf{b}|+|\xi|+|\bv|)||h_j||_{p_j})\\
&=\langle h_j,P_\alpha^{(\tau_j)}g_j^{p_j-1}\rangle +O((|\mathbf{b}|+|\xi|+||\phi||+|\bv|)||h_j||_{p_j}).
\end{align*}
Considering the full expression from \eqref{eq:hj expression},
\begin{multline}
\langle \tilde{h_j},P_\alpha^{(\tau_j)}g_j^{p_j-1}\rangle=\langle h_j,P_\alpha^{(\tau_j)}g_j^{p_j-1}\rangle\\
+\langle g_j(x)x\cdot[b_j-2p_j'(\phi(x)+v_j)-i\xi-iA^TJA\tilde{v_j}],P_\alpha^{(\tau_j)}g_j^{p_j-1}\rangle\\
+O((||\phi||+|\bv|+|\mathbf{b}|+|\xi|)^2+(||\phi||+|\bv|+|\mathbf{b}|+|\xi|)||h_j||_{p_j}).
\end{multline}
In order to complete the proof via the Implicit Function Theorem, it suffices to show that the map
\begin{equation}\label{eq:invertible map}
(\mathbf{b},\bv,\xi,\phi)\mapsto \langle g_j(x)[b_j-v_j\cdot x-i(\xi+A^TJA\tilde{v_j})\cdot x-2p_j'x\cdot\phi(x)],P_\alpha^{(\tau_j)}g_j^{p_j-1}\rangle
\end{equation}
with $(j,\alpha)$ ranging over the indices specified in Proposition \ref{prop:only} and taking the real or imaginary part as specified is invertible.

Since $\{x\cdot\phi(x):\phi\text{ is a symmetric real }(2d)\times(2d)\text{ matrix}\}$ is precisely the set of symmetric, real, homogeneous, quadratic polynomials on $\R^{2d}$, the map $\phi\mapsto(\langle x\cdot\phi(x)g_3(x),P_\alpha^{(\tau_3)}g_3^{p_3-1}\rangle:|\alpha|=2)$ is invertible. These inner products vanish when $|\alpha|=0,1$.

The contribution from the mapping $(\bv,\xi)$ with the constraint $v_1+v_2+v_3=0$ to $\langle g_j(x)[v_j\cdot x-i(\xi+A^TJA\tilde{v_j})\cdot x],P_\alpha^{(\tau_j)}g_j^{p_j-1}\rangle$ ranging over the indices of Proposition \ref{prop:only} and taking the real and imaginary parts is also invertible. These products vanish when $|\alpha|=0,2$.

Lastly, the contribution from $\langle g_j(x)b_j,P_\alpha^{(\tau_j)}g_j^{p_j-1}\rangle$ indexed over $j=1,2,3$ is in one-to-one correspondence with $\mathbf{b}$ and these inner products vanish when $|\alpha|=1,2$. Thus, the maps described in \eqref{eq:invertible map} is invertible.
\end{proof}

\section{Putting it All Together}

\begin{proof}[Proof of Theorem \ref{theorem:main}]

Let $(h_1,h_2,h_3,B)$ be a 4-tuple with $h_j\in L^{p_j}$ and $B$ an arbitrary $(2d)\times(2d)$ matrix such that $\dist_\bp(\scriptO_{TC}(\bh,B),(\bg,0))$ is sufficiently small.
By the Balancing Lemma, there exists an element $(F_1,F_2,F_3,A)$ of the orbit of $(\bh,B)$ which satisfies the orthogonality conditions of Proposition \ref{prop:only}. Let $f_j=F_j-g_j$. Since
\begin{equation}
\dist_\bp(\scriptO_{TC}(\bh,B),(\bg,0))^2\leq \max_j||f_j||_{p_j}^2+||A^TJA||^2,
\end{equation}
it suffices to prove that
\begin{equation}
\frac{\T_A(\bg+\bf)}{\prod_j||g_j+f_j||_{p_j}}\leq A_\bp^{2d}-c\left[\max_j||f_j||_{p_j}^2+||A^TJA||^2\right].
\end{equation}
By Proposition \ref{prop:only},
\begin{equation}
\frac{\T_0(\bg+\bf)}{\prod_j||g_j+f_j||_{p_j}}\leq A_\bp^{2d}-c||\bf||_\bp^2-\tilde{c}\sum_j||f_{j,\flat}||_{p_j}^{p_j}.
\end{equation}
Thus, it suffices to show that
\begin{equation}
\frac{(\T_A-\T_0)(\bg+\bf)}{\prod_j||g_j+f_j||_{p_j}}\leq -c||A^TJA||^2+O((||\bf||_\bp+||A^TJA||)^3).
\end{equation}

We may ignore the product of norms in the denominator by appropriate modification of the constant $c$. Expanding $(\T_A-\T_0)(\bg+\bf)$ through the multilinearity of $\T_A-\T_0$, one obtains four types of terms. By Lemma \ref{lemma:onesigma} and Lemma \ref{lemma:twosigma},
\begin{multline*}
(\T_A-\T_0)(g_1,g_2,g_3)=\iint g_1(x)g_2(y)g_3(x+y)(e^{i\sigma(Ax,Ay)}-1)dxdy\hfill\\
\hfill=\iint g_1(x)g_2(y)g_3(x+y)(i\sigma(Ax,Ay)-\frac{1}{2}\sigma(Ax,Ay)^2+O(\sigma(Ax,Ay)^3))dxdy\\
\hfill=-c||A^TJA||^2+O(||A^TJA||^3).
\end{multline*}

By similar application of Lemma \ref{lemma:onesigma},
\begin{align*}
(\T_A-\T_0)(f_1,g_2,g_3)&=\iint f_1(x)g_2(y)g_3(x+y)(i\sigma(Ax,Ay)+O(\sigma(Ax,Ay)^2))dxdy\\
&\leq 0+||A^TJA||^2\int f_1(x)x^2\left[\int y^2g_2(y)g_3(x+y)dy\right]dx\\
&=O(||f_1||_{p_1}||A^TJA||^2)
\end{align*}
and likewise for all other terms involving one $f_j$ and two $g_j$'s.

The $(\T_A-\T_0)(f_1,f_2,f_3)$ term is negligible by Lemma \ref{lemma:trivial}, so only the terms with two $f_j$'s and one $g_j$ remain. Lemma \ref{lemma:3rd order} only addresses the situation where the $f_j$ are replaced with $f_{j,\sharp}$. However, Proposition \ref{prop:only} provides a $-\tilde{c}\sum_j||f_{j,\flat}||_{p_j}^{p_j}$ term which may be used here. Expanding further and applying Lemma \ref{lemma:3rd order} and \eqref{eq:trivial bound} gives
\begin{multline}
|(\T_A-\T_0)(f_1,f_2,g_3)|-\tilde{c}\sum_j||f_{j,\flat}||_{p_j}^{p_j}\\ \leq o(||\bf||_\bp^2+||A^TJA||^2)+ O(||f_{1,\sharp}||_{p_1}||f_{2,\flat}||_{p_2}+||f_{2,\sharp}||_{p_2}||f_{1,\flat}||_{p_1})-\tilde{c}\sum_j||f_{j,\flat}||_{p_j}^{p_j}.
\end{multline}
If $\sum_j||f_{j,\flat}||_{p_j}^{p_j}$ is small relative to $||\bf||_\bp^2$, then the above is negligible, as each $||f_{j,\flat}||_{p_j}$ is small. (Specifically, one may split into cases where $||f_{j,\flat}||_{p_j}\geq||f_j||_{p_j}^{(4-p_j)/2}$ for at least one $j$ or none of the $j$.) However, if $\sum_j||f_{j,\flat}||_{p_j}^{p_j}$ is large relative to $||\bf||_\bp^2$, then the last term dominates (as $p_j<2$), and the above is still negligible.

This holds for the other terms involving two $f_j$'s and one $g_j$, thus completing the proof of the main theorem.


\end{proof}

\end{document}